\newcommand{\ds}{\displaystyle}
\newcommand{\tensor}{\otimes}
\newcommand{\op}{\mathcal}
\newcommand{\cdc}{,\dots,}
\newcommand{\FT}{\mathsf{ft}}
\newcommand{\wscmo}{\mathcal{H}_{\text{Lie}}}
\numberwithin{equation}{section}
\newtheorem{theorem}{Theorem}[section]
\theoremstyle{plain}
\newtheorem{corollary}[theorem]{Corollary}
\newtheorem{lemma}[theorem]{Lemma}
\newtheorem{proposition}[theorem]{Proposition}
\theoremstyle{definition}
\newtheorem{definition}[theorem]{Definition}
\newtheorem{remark}[theorem]{Remark}
\begin{document}

\title{Wheel graph homology classes via Lie graph homology} 
\author{Benjamin C. Ward}
\email{benward@bgsu.edu}

%\date{December 20, 2010 }
%\subjclass{Primary , ?; Secondary ?, ?} 
%\keywords{Keyword one, keyword two etc.}

\begin{abstract} We give a new proof of the non-triviality of wheel graph homology classes using higher operations on Lie graph homology and a derived version of Koszul duality for modular operads.  \end{abstract}
\maketitle
%\tableofcontents

\section{Introduction.}

In the seminal paper 
\cite{WTw}, Willwacher constructed a family of non-trivial graph cohomology classes $\sigma_{2j+1}$ by analyzing an isomorphism between the $0^{th}$ cohomology of a certain graph complex $\mathsf{GC}_2$ with the Grothendieck-Teichm{\"u}ller Lie algebra $\mathfrak{grt}_1$.  The complex $\mathsf{GC}_2$ is (up to important details) the {\bf commutative} variant of a construction called the Feynman transform \cite{GeK2}.

In \cite{CHKV}, the authors study a family of group extensions of the outer automorphism groups of free groups and use the Leray-Serre spectral sequence to compute the homology of a low genus portion of the {\bf Lie} variant of this construction.  They show in particular that in genus 1, the virtual cohomological dimension consists of a family of classes $\alpha_{2j+1}$. %\footnote{they call it $\alpha_j$} 

The purpose of this paper is to show that the families of classes $\sigma_{2j+1}$ and $\alpha_{2j+1}$ correspond to each other under Koszul duality.

\subsection{The correspondence: non-technical version.}

Let us explain the nature of this correspondence and the perspective from which such a result is expected.  We omit several important technical details in this informal summary.

The statement that commutative and Lie structures are Koszul dual can be encoded in the language of operads.  One first defines an operadic generalization of the bar construction of an algebra.  One then proves that the homology of the bar construction of the commutative operad has the homotopy type of the Lie operad, and vice-versa.

Operads have operations parameterized by graphs of genus 0 (trees), and may be generalized by considering structures with operations parameterized by all graphs.  Such structures are called modular operads \cite{GeK2}, and the analog of the bar construction is called the Feynman transform which we denote $\FT$.  The commutative and Lie operads may be considered modular operads by simply declaring operations corresponding to higher genus graphs to be zero. % With the notion of Feynman transform we may describe the two families of graph homology classes above as living in $\sigma_{2j+1}\in H_\ast(\FTGK(\mathsf{Com}))$ and $\alpha_{2j+1}\in H_\ast(\FTGK(\mathsf{Lie}))$ respectively.

When viewed as modular operads the analogous Koszul duality relationship between $\mathsf{Com}$ and $\mathsf{Lie}$ no longer holds.  If it did, it would imply that $\FT(\mathsf{Com})$ had no homology in its higher genus summands which is patently false.   Indeed, both commutative graph homology $H_\ast(\FT(\mathsf{Com}))$  and Lie graph homology $H_\ast(\FT(\mathsf{Lie}))$ have a rich and highly non-trivial structure in higher genera.

There is, however, a more subtle Koszul duality relationship between these modular operads.  The modular operad $H_\ast(\FT(\mathsf{Lie}))$ encoding Lie graph homology contains a copy of the commutative modular operad $\mathsf{Com}$ but is Koszul dual (in a suitably derived sense) to the Lie modular operad $\mathsf{Lie}$.    Therefore in genus $g\geq 1$, the (derived) Feynman transform of Lie graph homology is acyclic on the one hand, and contains a subcomplex computing commutative graph homology on the other.

We thus conclude that {\it every commutative graph homology class of non-zero genus may be represented by a graph labeled by Lie graph homology classes}.  This correspondence is realized via the boundary operator in a derived version of the Feynman transform.  In particular, the differential in this acyclic complex depends not just on the modular operad structure of Lie graph homology, but also on its higher ``Massey products'' which we compute here-in.  Under this correspondence, the homology class $\sigma_{2j+1}$ corresponds to a graph whose lone non-trivial vertex label is built from $\alpha_{2j+1}$. % This is the nature of the correspondence that we make explicit for wheel graph homology classes. 

\subsection{The correspondence: technical version.}
%By Lie graph homology we refer to the homology of the Feynman transform of the $\mathfrak{K}$-twisted Lie operad  $\Sigma^{-1} s \mathsf{Lie}$; the result being an (untwisted) modular operad.

The graph complex $(\mathsf{GC}_2,d_{\mathsf{GC}})$ of \cite{WTw} which we consider here-in consists of graphs with no legs and no loops (aka tadpoles). The differential is a sum of all possible edge expansions, with signs encoded by a factor of the top exterior power of the set of edges of the graph.   This complex splits over genus, and the complex   $\mathsf{GC}_2^g$ is (after a shift in degree by $\Sigma^{-2g}$) a sub-complex of $\FT(\mathsf{Com})(g,0)$.  We will suppress the degree shift notation $\Sigma^{-2g}$ in this introduction.

The modular operad structure of Lie graph homology was studied in \cite{CHKV}.  This modular operad is not formal and so, abstractly, higher operations exist which make Lie graph homology into a modular operad up-to-homotopy (called a weak modular operad) which is homotopy equivalent to the Feynman transform of an extension by zero \cite{WardMP}.  The derived version of the Feynman transform $\FT$ mentioned above has a differential with additional terms taking these higher operations into account.   Making these higher operations explicit in all genera is both unlikely to be possible and unnecessary for the task at hand.  As we shall show, the wheel graph homology classes can be detected by higher operations on Lie graph homology landing in genus 1.  With this in mind, we proceed as follows.

First we truncate Lie graph homology above genus 1 and endow the resulting semi-classical modular operad with explicit higher operations.  The resulting weak modular operad is denoted $\wscmo$.  The projection $\wscmo\to \mathsf{Com}$ induces a map of $\mathfrak{K}$-modular operads $\FT(\mathsf{Com}) \hookrightarrow   \FT(\wscmo)$.  Declare a loop to be simple if its adjacent vertex is trivalent and of genus $0$.  The subspaces of the Feynman transform supported on graphs with simple loops form a dg submodule.  Let $\bar{\FT}$ denote passage to the quotient;  we also write $\bar{\eta}$ for the projection of a vector $\eta$ to this quotient.    Lifting $\FT(\mathsf{Com}) \hookrightarrow \FT(\wscmo)$ we have the following diagram of dg $\mathbb{S}$-modules:
	\begin{equation}\label{zz2}
\mathsf{GC}_2 \hookrightarrow \bar{\FT}(\mathsf{Com}) \hookrightarrow \bar{\FT}(\wscmo).
\end{equation}
The left hand inclusion is split; we denote the projection
$\pi\colon \bar{\FT}(\mathsf{Com}) \to \mathsf{GC}_2$.

We write $\omega_{2j+1} \in \mathsf{GC}_2^\ast$  for the wheel graph; a $2j+1$-gon along with a central vertex connected to the other vertices, tensored with a fixed element in the top exterior power of the span of the set of edges of the graph (Figure $\ref{fig:w5}$).  %In particular $\omega_{2j+1}$ has degree $4j+2$.    
The main technical result of this paper is the following construction:

\begin{theorem}\label{itsaboundary2} There exists an element  $\eta_{2j+1}\in \FT(\wscmo)(2j+1,0)$ %of degree $-4j-1$
	 such that $d(\bar{\eta}_{2j+1})\in \bar{\FT}(\mathsf{Com}) $ %\subset \FT(\wscmo)/\sim$ 
	 and $\langle \omega_{2j+1}, \pi\circ d(\bar{\eta}_{2j+1}),  \rangle \neq 0$. 
\end{theorem}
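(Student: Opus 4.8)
The plan is to write $\eta_{2j+1}$ down explicitly and then read off its boundary. I would take as the leading term of $\eta_{2j+1}$ the two-vertex decorated graph in $\FT(\wscmo)(2j+1,0)$ built from a central ``hub'' vertex of genus $0$ and valence $2j+1$, decorated by the commutative generator $\mu\in\mathsf{Com}(0,2j+1)\subset\wscmo(0,2j+1)$, joined by $2j+1$ parallel edges to a single genus $1$ vertex decorated by the class $\alpha_{2j+1}\in\wscmo(1,2j+1)$. One checks that the two-vertex multigraph has first Betti number $2j$, so the total genus is $1+2j=2j+1$ and there are no legs, and I would orient it by a fixed generator of the top exterior power of its edge set. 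This is the ``lone non-trivial vertex'' graph advertised in the introduction. I expect $\eta_{2j+1}$ to be this leading term corrected by lower blow-up--order terms needed to enforce the first assertion, and I would fix those corrections in the course of proving it.

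Next I would compute $d(\bar{\eta}_{2j+1})$ by organizing the (edge-expanding) Feynman differential according to the arity of the vertex blow-ups, writing $d=\partial_1+\partial_{\geq 2}$, where $\partial_1$ is the semi-classical part coming from the strict genus $\leq 1$ modular operad structure and $\partial_{\geq 2}$ collects the duals of the explicit higher operations on $\wscmo$. There are then three kinds of terms to track. The loop-type part of $\partial_1$ applied to the $\alpha_{2j+1}$-vertex replaces it by a single genus $0$ vertex carrying a tadpole; this is already supported on genus $0$ vertices, hence lies in $\bar{\FT}(\mathsf{Com})$, but it is a tadpole graph and so is annihilated by $\pi$. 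The crucial term is a top higher operation in $\partial_{\geq 2}$ that resolves the genus $1$ decoration $\alpha_{2j+1}$ into the $(2j+1)$-gon of trivalent genus $0$ commutative vertices; glued to the hub along the $2j+1$ original edges, now the spokes, this is exactly $\omega_{2j+1}$. The remaining terms, namely the tree-type cocompositions of the $\alpha_{2j+1}$-vertex and all blow-ups of the commutative hub, still carry a genus $1$ vertex.

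The substance of the first assertion, $d(\bar{\eta}_{2j+1})\in\bar{\FT}(\mathsf{Com})$, is that every such genus $1$--retaining term must cancel. I would prove this by matching, with opposite sign, the tree-type cocompositions of $\alpha_{2j+1}$ against the cobar (edge-expansion) differential of $\mathsf{Com}$ at the hub: both produce a graph with one genus $1$ vertex, the hub split into two genus $0$ commutative pieces, and the matching is controlled by the structure relations of the weak modular operad $\wscmo$ together with the fact that $\alpha_{2j+1}$ is a genuine cycle in $\FT(\mathsf{Lie})$. Any genus $1$ contribution not cancelled in this way is precisely the obstruction determining the lower-order corrections to $\eta_{2j+1}$, whose existence I would extract from the acyclicity of the auxiliary hub complex, that is, the bar complex of $\mathsf{Com}$, whose homology is concentrated in the Lie part; the vanishing of simple loops in the quotient $\bar{\FT}$ is used to discard the configurations that would otherwise spoil this acyclicity. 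I expect this paragraph to be the main obstacle: identifying the precise higher cooperation dual to the polygon resolution and verifying, with full control of signs via the determinant of the edge set, that all surviving genus $1$ contributions cancel, so that $\eta_{2j+1}$ can genuinely be completed to an element whose reduced boundary is commutative.

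Finally, for the pairing I would observe that $\pi$ retains only the $\mathsf{GC}_2$-part, killing the tadpole and corrected terms, so that $\langle\omega_{2j+1},\,\pi\circ d(\bar{\eta}_{2j+1})\rangle$ equals the coefficient of the wheel in the top higher term. This coefficient is computed from the explicit cyclic polygon representative of $\alpha_{2j+1}$, weighted by the automorphisms of $\omega_{2j+1}$ and a sign coming from the determinant of the edge set; the odd number $2j+1$ of spokes is exactly what prevents this orientation from being reversed by an automorphism of the wheel, so the coefficient is a nonzero rational, using the nontriviality of $\alpha_{2j+1}$ established in \cite{CHKV}. Since $\pi$ is a map of dg $\mathbb{S}$-modules and $d^2=0$, the element $\pi\circ d(\bar{\eta}_{2j+1})$ is automatically a $\mathsf{GC}_2$-cycle, so this nonzero pairing with the cocycle $\omega_{2j+1}$ is the meaningful homological statement, and it is what ultimately detects $\omega_{2j+1}$ via $\alpha_{2j+1}$.
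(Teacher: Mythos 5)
Your construction starts from the graph obtained by contracting the \emph{exterior} $(2j+1)$-gon of the wheel: a genus $0$ hub of valence $2j+1$ joined by $2j+1$ parallel edges to a genus $1$ vertex labeled $\alpha_{2j+1}$. This is exactly the ``logical first guess'' that the paper explicitly rejects in its introduction, and for $j\geq 2$ it genuinely fails, for a reason your cancellation scheme cannot repair. Since the hub has valence $2j+1\geq 5$, it admits stable splittings into two genus $0$ vertices joined by an edge. Each such split graph $\xi$ still carries the genus $1$ vertex labeled $\alpha_{2j+1}$, hence lies in $\ker(\widehat{\mathsf{B}}(\wscmo)\twoheadrightarrow\widehat{\mathsf{B}}(\mathsf{Com}))$; and contracting the splitting edge shows that $\partial_1\xi$ contains your decorated graph with coefficient $\pm 1$ (it survives $Aut$-coinvariants because permuting parallel edges acts by a sign on both $\mathfrak{K}^\ast$ and on the alternating class $\alpha_{2j+1}$). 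Dually, $d(\bar{\eta}_{2j+1})$ pairs nontrivially with kernel elements, so $d(\bar{\eta}_{2j+1})\notin\bar{\FT}(\mathsf{Com})$. The terms you propose to cancel these against, the ``tree-type cocompositions of the $\alpha_{2j+1}$-vertex,'' are identically zero: splitting the genus $1$ vertex into a genus $1$ vertex of valence $a+1$ and a stable genus $0$ vertex forces $a\leq 2j-1$, while $H_{2j}(\Gamma_{1,a+1})=0$ for $a<2j$ by Lemma \ref{gamma}; and even if they were nonzero, they would be supported on different graph summands (genus $1$ vertex split, rather than hub split), so no cancellation between the two families is possible. The simple-loop quotient is of no help here either, since hub-split graphs contain no loops at all.

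Your fallback, that unspecified lower-order corrections exist ``by acyclicity,'' begs the question: a corrected $\eta_{2j+1}$ with the two desired properties exists if and only if $\partial(\omega_{2j+1})\notin\partial\bigl(\ker(\widehat{\mathsf{B}}(\wscmo)\twoheadrightarrow\widehat{\mathsf{B}}(\mathsf{Com}))\bigr)$, which is essentially the whole content of the theorem, and you give no mechanism for producing or controlling the corrections. The paper resolves this not by correcting the exterior-contraction guess but by abandoning it: its $\eta_{2j+1}=\beta_{2j+1}^\ast$ is supported on the graph $\theta_{2j+1}$ obtained by contracting \emph{interior} polygons, whose genus $0$ vertex is trivalent (hence unsplittable) and whose excess valence is absorbed into $2j-2$ loops at the genus $1$ vertex, labeled by $x_{2j+1}=\alpha_{2j+1}$ composed with commutative products. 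The representation theory of $S_2\wr S_n$ (Lemma \ref{replemma}, Corollary \ref{loopcor}) then shows that \emph{no} boundary of a kernel element has a nonzero $\theta_{2j+1}$-coefficient (Proposition \ref{propzero}, Corollary \ref{deg0}), so the naive characteristic functional works with no corrections at all, while Lemma \ref{notzero} supplies the nonzero pairing with $\omega_{2j+1}$. It is worth noting that for $j=1$ your graph coincides with $\theta_3$ and your proposal is essentially the paper's argument; the divergence, and the failure, begins exactly at $j=2$, where the hub first becomes splittable.
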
  

The condition $\langle \omega_{2j+1}, \pi\circ d(\bar{\eta}_{2j+1}) \rangle \neq 0$ ensures that the elements $\pi\circ d(\bar{\eta}_{2j+1})$ are not boundaries in the submodule $\mathsf{GC}_2\subset \bar{\FT}(\wscmo)$.  On the other hand, the elements $d(\bar{\eta}_{2j+1})$ are boundaries in the larger complex and so certainly must project to cycles in $\mathsf{GC}_2$.  Thus, each $\pi\circ d(\bar{\eta}_{2j+1})$ represents a non-trivial graph homology classes in $\mathsf{GC}_2$.  Equivalently we may state:

%Since every term in the differential of $\delta(\omega_{2j+1}) \in \FT(\mathsf{Com})^\ast$ has an underlying graph with parallel edges, routine considerations show that $\omega_{2j+1}$ is a cycle.  Theorem $\ref{itsaboundary2}$ shows that it's not a boundary.  Thus:

\begin{corollary}\label{dualcor}  The homology class $[\omega_{2j+1}]\in H_0(\mathsf{GC}_2^\ast)$ is non-trivial.
\end{corollary}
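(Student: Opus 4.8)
The plan is to deduce the corollary from Theorem~\ref{itsaboundary2} by a short duality argument, using the evaluation pairing $\langle\,\cdot\,,\cdot\,\rangle$ between the dual complex $\mathsf{GC}_2^\ast$ and the chain complex $\mathsf{GC}_2$. Write $z_{2j+1} := \pi\circ d(\bar{\eta}_{2j+1})\in\mathsf{GC}_2$ for the chain produced by the theorem. Since the corollary asserts non-triviality of the class $[\omega_{2j+1}]\in H_0(\mathsf{GC}_2^\ast)$, I first record that $\omega_{2j+1}$ is a cocycle so that this class is defined at all: the differential on $\mathsf{GC}_2^\ast$ is the transpose of the edge-expansion differential $d_{\mathsf{GC}}$, i.e.\ a signed sum of edge contractions, and one checks directly that this annihilates the wheel. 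Equivalently this is the standard fact that the odd wheels represent degree-zero cohomology classes in Willwacher's complex, so I will treat it as known and concentrate the argument on non-triviality.

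Next I verify that $z_{2j+1}$ is a cycle. The element $\bar{\eta}_{2j+1}$ lives in $\bar{\FT}(\wscmo)$, so $d(\bar{\eta}_{2j+1})$ is a boundary and hence a cycle there. By the theorem this cycle in fact lies in the subcomplex $\bar{\FT}(\mathsf{Com})$ of \eqref{zz2}; as the inclusion $\bar{\FT}(\mathsf{Com})\hookrightarrow\bar{\FT}(\wscmo)$ is a map of dg $\mathbb{S}$-modules, the differential on $\bar{\FT}(\mathsf{Com})$ is the restriction of that on $\bar{\FT}(\wscmo)$, whence $d(\bar{\eta}_{2j+1})$ is a cycle in $\bar{\FT}(\mathsf{Com})$ as well. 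Applying the splitting projection $\pi\colon\bar{\FT}(\mathsf{Com})\to\mathsf{GC}_2$, which is a chain map by the splitting hypothesis, shows that $z_{2j+1}=\pi\circ d(\bar{\eta}_{2j+1})$ is a cycle in $\mathsf{GC}_2$.

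Finally I run the pairing argument. Suppose for contradiction that $\omega_{2j+1}=d^\ast\phi$ were a coboundary for some $\phi\in\mathsf{GC}_2^\ast$. Then, by adjointness of $d^\ast$ with $d_{\mathsf{GC}}$ together with the fact that $z_{2j+1}$ is a cycle, one computes $\langle\omega_{2j+1},z_{2j+1}\rangle=\langle d^\ast\phi,z_{2j+1}\rangle=\langle\phi,d_{\mathsf{GC}}\,z_{2j+1}\rangle=0$, contradicting the inequality $\langle\omega_{2j+1},\pi\circ d(\bar{\eta}_{2j+1})\rangle\neq 0$ supplied by the theorem. Hence $\omega_{2j+1}$ is not a coboundary and $[\omega_{2j+1}]\in H_0(\mathsf{GC}_2^\ast)$ is non-trivial. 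I expect no genuine obstacle here: the entire weight of the result sits in Theorem~\ref{itsaboundary2}, namely the construction of $\eta_{2j+1}$ and the computation of the requisite higher operations on Lie graph homology. The only points requiring care in this deduction are bookkeeping ones, that $\bar{\FT}(\mathsf{Com})\hookrightarrow\bar{\FT}(\wscmo)$ and $\pi$ are honest chain maps (so boundaries are carried to cycles) and that $\omega_{2j+1}$ is genuinely closed, so that the evaluation pairing descends to (co)homology.
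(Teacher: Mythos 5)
Your proposal is correct, and the facts you flag as needing care do hold in the paper's framework: $\omega_{2j+1}$ is closed (this is Lemma~\ref{cycle}), the inclusion $\bar{\FT}(\mathsf{Com})\hookrightarrow\bar{\FT}(\wscmo)$ is a map of dg modules by construction, and $\pi$ is an honest chain map because the loop-graphs span a complementary subcomplex (an expansion that would turn a loop into an ordinary edge produces parallel edges with commutative labels, which vanish). However, your route is genuinely different from the proof the paper gives in Subsection~\ref{ld}. What you wrote is essentially the duality argument sketched in the introduction just after Theorem~\ref{itsaboundary2}: the cycle $z_{2j+1}=\pi\circ d(\bar{\eta}_{2j+1})$ pairs non-trivially with the cocycle $\omega_{2j+1}$, so neither class can be (co)exact. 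The paper's own proof of Corollary~\ref{dualcor} never invokes Theorem~\ref{itsaboundary2}; it runs entirely on the co-Feynman transform side. There one supposes $d_{\mathsf{GC}_2^\ast}(\xi)=\omega_{2j+1}$, lifts $\xi$ canonically into $\widehat{\mathsf{B}}(\wscmo)$, where $\partial(\xi)=\partial_1(\xi)+\partial_{>1}(\xi)=\omega_{2j+1}+(\text{terms of positive internal degree})$, and derives a contradiction from $\partial^2(\xi)=0$: by Lemma~\ref{notzero} the summand $\partial(\omega_{2j+1})$ contains $\beta_{2j+1}$ with non-zero coefficient, and by Corollary~\ref{deg0} no term coming from positive internal degree can hit the $\theta_{2j+1}$ summand to cancel it. So both arguments funnel through the same computational core (Lemma~\ref{notzero} and Corollary~\ref{deg0}); the difference is packaging. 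Your version is more modular: it uses the theorem as a black box plus pure homological algebra, making the logical dependence of the corollary on the theorem explicit, at the cost of needing the pairing formalism and the dg-splitting of $\pi$. The paper's version is a self-contained chain-level obstruction argument --- $\beta_{2j+1}$ obstructs any would-be bounding cochain for $\omega_{2j+1}$ --- which runs in parallel to the theorem rather than through it, and which additionally shows that the preimage $\partial^{-1}(\omega_{2j+1})$ is constrained at the level of specific graph summands, information that the formal duality argument discards.
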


The proof of Theorem $\ref{itsaboundary2}$ is an explicit construction.  The element $\eta_{2j+1}$ has underlying graph pictured in Figure $\ref{beta}$, and carries a vertex label given by the VCD class $\alpha_{2j+1}$ composed with $2j-2$ copies of the commutative product.  In this way, we find $\eta_{2j+1}$ not by contracting the exterior polygon (a logical first guess) but by contracting interior polygons (Figure $\ref{fig:w5}$).  Contracting such interior polygons produces graphs having $n=2j-2$ adjacent loops and the construction of $\eta_{2j+1}$ depends crucially on the representation theory of the Weyl group of $SO(2n+1)$ to show that the class we construct is the unique class (up to scalar multiple) which does not vanish upon passage to coinvariants by the group of isomorphisms of such a graph.

Corollary $\ref{dualcor}$ may also be derived from the results of \cite{WTw}, see \cite[Theorem 2.6]{CGPJAMS}.  What's new here is the technique, as our proof completely avoids discussion of Drinfeld associators and $\mathfrak{grt}_1$.  The interplay between Lie and commutative graph homology is subtle, and much interesting work remains.  For example how to realize Morita and Eisenstein classes via Massey products on commutative graph homology, or how to relate open conjectures on either side of this correspondence. For now we simply offer the results of this paper as a polite suggestion that commutative and Lie graph homology may be effectively studied in tandem.

%Should cite \cite{Kont2} somewhere...

This paper is organized as follows.  A brief overview of the prerequisites is given in Section 2.  In Section 3 we compute relations in the modular operad for Lie graph homology and then construct higher operations for Lie graph homology landing in genus 1.  The construction of the element $\eta_{2j+1}$ and verification of its properties is given in Section 4.  Throughout we mostly work with the ``co-Feynman transform'', and then linear dualize as the very last step (Subsection $\ref{ld}$) to prove the results stated in this introduction.

\tableofcontents

\begin{figure}
	\centering
	\includegraphics[scale=0.6]{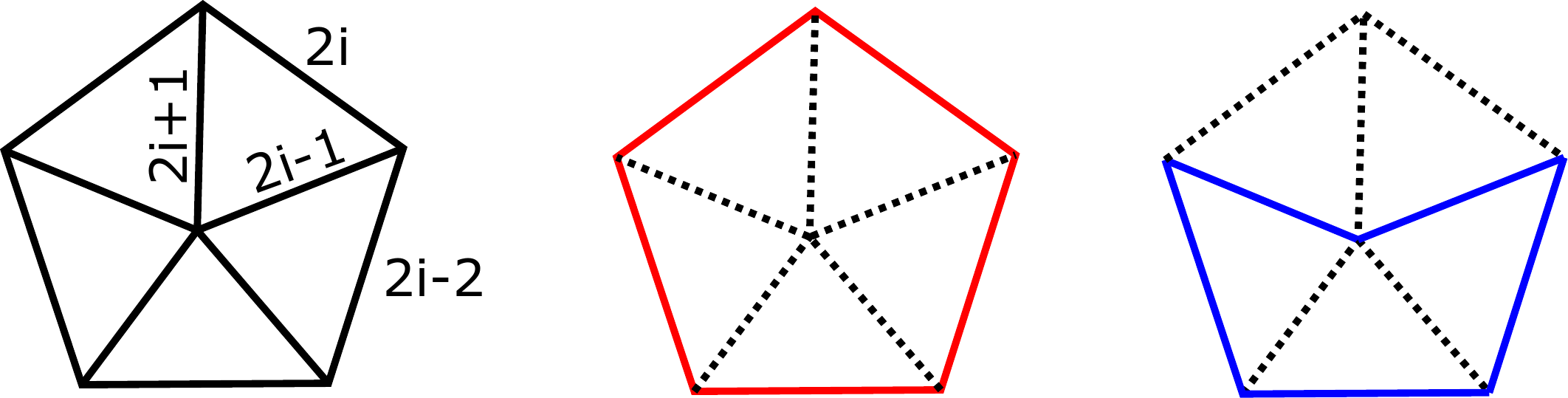}
	\caption{Left: The wheel graph $\omega_{2j+1}$ for $j=2$ with edge order indicated. Center: The ``exterior'' $2j+1$-gon is the pictured (red) subgraph.  Right:  The ``interior'' $2j+1$-gon is the pictured (blue) subgraph.}
	\label{fig:w5}
\end{figure}

\subsection{Conventions}  Throughout we work in the category of differential graded vector spaces over $\mathbb{Q}$ with homological grading conventions.  We write $A^\ast$ for the graded linear dual of $A$.  We denote symmetric groups by $S_n$ and denote irreducible representations of $S_n$ by $V_{\alpha}$ for a partition $\alpha = (\alpha_1\cdc \alpha_r)$ of $n$.

\section{Background.}

\subsection{Graphs}  We briefly review the standard operadic definition of an abstract graph, see \cite{WardMP} for full details.  A graph $\gamma = (V,F,a,\iota)$ consists of a finite non-empty set of vertices $V$, a finite set of flags (also called half-edges) $F$, a function $a\colon F\to V$ which indicates to which vertex a flag is adjacent, and an involution $\iota\colon F\to F$.  The orbits of $\iota$ of order two are called the edges of $\gamma$ and denoted $E(\gamma)$.  The orbits of order one are called the legs $\gamma$ and denoted $\text{leg}(\gamma)$.  A loop is an edge $\{f,\iota(f)\}$ for which $a(f)=a(\iota(f))$.  The number $|a^{-1}(v)|$ is called the valence of the vertex $v$.  %; a graph is called trivalent if each vertex has valence 3.  
%To any graph $\gamma$ we may associated its underlying leg-free graph, denoted $\underline{\gamma}$, by simply discarding the legs.

A graph determines a 1-dimensional CW complex and we say the graph is connected if this CW complex is connected.  A genus labeling of a graph is a function $g\colon V\to \mathbb{Z}_{\geq 0}$.  A connected, genus labeled graph is stable if $2g(v)+ |a^{-1}(v)| \geq 3$ for every $v\in V$.  A leg labeling of a graph is a bijection $\{1\cdc n\} \to \text{leg}(\gamma)$, for the appropriate $n$. A  modular graph is a stable graph along with a leg labeling.  The total genus of a modular graph is $g(\gamma):=\beta_1(\gamma)+\sum_v g(v)$, where $\beta_1$ denotes the first Betti number of the associated CW complex.  The type of a modular graph is the pair of non-negative integers $(g(\gamma), |\text{leg}(\gamma)|)$.% and is denoted $\ast_\gamma$.

An isomorphism of abstract graphs is a pair of bijections between the respective vertices and flags which commutes with the adjacency and involution maps.  An isomorphism of modular graphs is an isomorphism of abstract graphs which preserves the leg and genus labeling.  If $\gamma$ is a modular graph we write $Aut(\gamma)$ for the group of automorphisms of $\gamma$ viewed as a modular graph. % An automorphism $\phi$ of a modular graph determines a bijection on the set of edges of $\gamma$ which we denote $\bar{\phi}$.
For non-negative integers $g$ and $n$ with $2g+n\geq 3$ we fix once and for all a skeleton of the groupoid of graphs of type $(g,n)$ and call it $\mathsf{Gr}(g,n)$.

A subgraph of a graph is a pair of subsets of $V$ and $F$ closed under $a$ and $\iota$.  A nest $N$ on a graph $\gamma$ is a proper, connected subgraph of $\gamma$ containing no legs. 
% subset of the edges of $\gamma$ such that the smallest subgraph containing $N$ is connected.  
Given a nest $N$ on a graph $\gamma$ we define two auxiliary graphs.  The modular graph $\gamma/N$ is formed by contracting the edges of $N$ and their adjacent vertices to a single vertex labeled to preserve the total genus of the graph.  We call this new vertex $N$.  The graph $\hat{N}$ is the graph formed by adding as legs all flags of $F(\gamma)\setminus F(N)$ which are adjacent to vertices in $N$. 
Note the legs of $\hat{N}$ are not numerically labeled, but are in bijective correspondence with the flags adjacent to the vertex $N$ in $\gamma/N$.

Define $\mathfrak{K}^\ast(\gamma)$ to be the top exterior power of the set $E(\gamma)$.  Explicitly, $\mathfrak{K}^\ast(\gamma)$ is a 1-dimensional vector space concentrated in degree $|E(\gamma)|$ with carries an alternating action of the group $S_{E(\gamma)}$.  Define $\mathfrak{K}(\gamma)$ to be the linear dual of $\mathfrak{K}^\ast(\gamma)$.  Observe $\mathfrak{K}(\gamma)$ is naturally identified with $\mathfrak{K}^\ast(\gamma)$, but is concentrated in degree $-|E(\gamma)|$.  As we are using homological grading conventions, these definitions are opposite to the conventions in \cite{GeK2}.  A mod 2 order on the edges of a graph is defined to be a choice of unit vector in $\mathfrak{K}^\ast(\gamma)$ or equivalently $\mathfrak{K}(\gamma)$.

\begin{figure}
	\centering
	\includegraphics[scale=1.1]{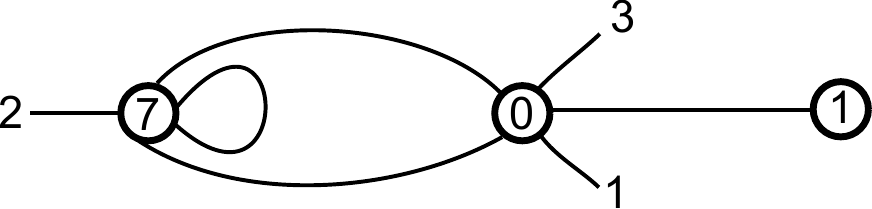}
	\caption{A modular graph of type $(10,3)$ having $3$ vertices, $11$ flags, $4$ edges, $1$ loop and $3$ legs.}
	\label{fig:graph}
\end{figure}

\subsection{Weak modular operads}
A stable $\mathbb{S}$-module $A$ is a family of differential graded $S_n$ representations $A(g,n)$, indexed over pairs of non-negative integers $(g,n)$ satisfying $2g+n\geq 3$.  Given such an $A$ we extend $A(g,-)$ to a functor valued in all finite sets via left Kan extension.  In particular, if $X$ is a finite set 
$A(g,X)$ is non-canonically isomorphic to $A(g,|X|)$.  Given a stable $\mathbb{S}$-module $A$ and a modular graph $\gamma$ with vertex $v$ we define $A(v):= A(g(v), a^{-1}(v))$ and define $A(\gamma):=\tensor_{v\in V(\gamma) } A(v)$.  Note $A(\gamma)$ inherits an action of the group $Aut(\gamma)$.

\begin{definition}\label{wmodef} Let $A$ be a stable $\mathbb{S}$-module.  A weak modular operad structure on $A$ is a collection of degree $-1$ operations
\begin{equation}\label{operations} \mathfrak{K}(\gamma)^\ast \tensor A(\gamma)\stackrel{\mu_\gamma}\to A(g,n),
\end{equation}
for each $\gamma \in \mathsf{Gr}(g,n)$ and for all $(g,n)$ which
are $S_n$-equivariant, $Aut(\gamma)$-coinvariant and which 
 satisfy the differential condition 
$ \sum  \mu_{\gamma/N}\circ_N \mu_{\hat{N}}=0 $.
\end{definition}
Here $\mu_{\gamma/N}\circ_N \mu_{\hat{N}}$ is the composition of operations given by plugging the output of $\mu_{\hat{N}}$ into the vertex $N$ of $\gamma/N$ and the sum is over all nests $N$ on $\gamma$.  The factors of $\mathfrak{K}$ are composed by pulling back the wedge product isomorphism:
\begin{equation*}
\mathfrak{K}(\gamma/N)\tensor \mathfrak{K}(\hat{N})\stackrel{\wedge}\to \mathfrak{K}(\gamma),
\end{equation*}
which in turn encodes signs in the differential.  We refer to \cite[Proposition 3.21]{WardMP} for additional details.  %We also remark that given a weak modular operad there is a natural extension of the operations $\mu_\gamma$ to graphs whose leaves are labeled by an arbitrary finite set, which we will regularly make use of below.
  Classical modular operads are weak modular operads for which $\mu_\gamma=0$ if $|E(\gamma)|>1$.  In this case, the differential condition collapses to the associativity of composition of these one edged operations.

\subsection{Co-Feynman transform}\label{coftsec}  Let $A$ be a weak modular operad.  %For simplicity, assume the internal differential $d_A=0$.  
The co-Feynman transform of $A$, denoted $\mathsf{B}(A)$ is defined to be its bar construction viewed as an algebra over the Koszul resolution of the groupoid colored operad encoding modular operads \cite{WardMP}.  Explicitly this means
$\mathsf{B}(A)$ is the dg $\mathfrak{K}$-modular co-operad given by:
\begin{equation*}
\mathsf{B}(A)(g,n) = \ds\bigoplus_{\gamma \in \mathsf{Gr}(g,n)} \mathfrak{K}^\ast(\gamma)  \tensor_{Aut(\gamma)} A(\gamma).
\end{equation*}
The $\mathfrak{K}$-modular co-operad structure is co-free and specified by decomposition maps
\begin{equation*}
\nu_\gamma \colon \mathsf{B}(A)(g,n) \to \mathfrak{K}^\ast(\gamma)\tensor_{Aut(\gamma)}\mathsf{B}(A)(\gamma)
\end{equation*}
for each $\gamma\in \mathsf{Gr}(g,n)$. These decomposition maps are defined on the summand of the source corresponding to a graph $\gamma^\prime$ by summing over all ways to add a single layer of nests
 %\footnote{This means every vertex is contained with in exactly one nest.  This could also be called a depth 1 nesting.} 
to it such that $\gamma^\prime/\sqcup N_i=\gamma$. %\footnote{Along with the sign convention that outer edges are placed in the last position.}  %The point is that the target corresponds to nested graphs whose outer graph is $\gamma$.  Note if two nestings are related by an automorphism we take both.

The weak $\mathfrak{K}$-modular operad structure maps on $A$ induce
a degree $-1$ map of stable $\mathbb{S}$-modules $\mu\colon \mathsf{B}(A) \to A$.  This map induces a  differential $\partial\colon \mathsf{B}(A)(g,n)\to \mathsf{B}(A)(g,n)$.  To describe $\partial$ it is sufficient to indicate its composite with projection $\pi_\gamma$ to a summand indexed by a $(g,n)$-graph $\gamma$, which is defined by asserting that the following diagram commutes:
\begin{equation}\label{coftd}
\xymatrix{  \mathsf{B}(A)(g,n) \ar[d]^{\nu_\gamma} \ar[r]^{ \ \ \ \ \partial} & \mathsf{B}(A)(g,n) \ar@{->>}[d]^{\pi_\gamma}  \\ 
	\mathsf{B}(A)(\gamma) \tensor_{Aut(\gamma)} \mathfrak{K}^\ast(\gamma) \ar[r]^{\ \ \ \overline{\mu}}	&  A(\gamma) \tensor_{Aut(\gamma)}\mathfrak{K}^\ast(\gamma).
}
\end{equation}	
Here $\overline{\mu}$ is the extension of $\mu$ by the Leibniz rule, composed with $-\tensor_{Aut(\gamma)}\mathfrak{K}^\ast(\gamma)$.  In particular, $\overline{\mu}$ is supported on nested graphs in $\mathsf{B}(A)(\gamma)$ for which exactly one vertex of $\gamma$ is not labeled by a corolla.%\footnote{The signs can be described by ``first out -- last in'' meaning we apply the decomposition after having permuted the nested edges to be in the last position and the outer edges to be in the first position.}

%\footnote{Questions of finiteness, square-zero, compatibility with co-operad structure all follow from the general theory which can be referenced here.}

The co-Feynman transform of a weak modular operad has a bigrading
\begin{equation}\label{bigrading}
\mathsf{B}(A)(g,n) = \ds\bigoplus_{r\geq 0, s\in \mathbb{Z}} \mathsf{B}(A)(g,n)^{r,s}
\end{equation}
given by the number of edges $r$ on the graph indexing a summand and $s$ is the sum of the internal degrees of the vertex labels.  With respect to this bigrading, the co-Feynman transform differential is supported on:
\begin{equation}\label{bigrading2}
\mathsf{B}(A)(g,n)^{r,s} \stackrel{\partial}\to \ds\bigoplus_{0\leq e \leq r} \mathsf{B}(A)(g,n)^{r-e, s+e-1}.
\end{equation}
In particular, on each bigraded component the differential splits as $\oplus_e\partial_e$ corresponding to those terms which contract $e$ edges, under convention that contracting $0$ edges means apply the internal differential $d_A$. 

\begin{definition}  When each $A(g,n)$ is finite dimensional in each graded component we define the (weak) Feynman transform $\FT$ to be the linear dual of the co-Feynman transform.  In particular, the Feynman transform of a weak modular operad is a $\mathfrak{K}$-modular operad.
\end{definition}

This definition generalizes the usual Feynman transform \cite{GeK2}, under the definition that a (strict) modular operad is a weak modular operad for which $\mu_\gamma =0$ whenever $\gamma$ has more than one edge.  %In general however, this weak Feynman transform has differential terms corresponding to the higher order compositions of $(A,\mu)$.

\begin{remark}\label{d1rmk} Let $(A, \mu)$ be a weak modular operad with vanishing internal differentials $d_A=0$.  Then $A$ along with only its one-edged contractions forms a (strict) modular operad.  In this case $\partial_1$ is itself square zero; it is the linear dual of the differential in the usual Feynman transform of this (strict) modular operad.
\end{remark}

\subsection{$\FT(\mathsf{Com})$ and $\mathsf{GC_2}$} \label{com}
As above, $\mathsf{Com}$ is the operad encoding commutative algebras, viewed as a modular operad by extension to higher genus by $0$.  %Thus we consider $\mathsf{Com}(g,n)$ to be the ground field if $g=0$ and $0$ otherwise.  
It follows that
\begin{equation*}
\FT(\mathsf{Com})(g,n)\cong \ds\bigoplus \mathfrak{K}(\gamma)_{Aut(\gamma)},
\end{equation*}
where the sum is taken over all $\gamma \in \mathsf{Gr}(g,n)$ for which $g(v)=0$ for all $v\in V(\gamma)$. 

Since all vertices have genus $0$, no differential terms expand loops, and so the sum over those graphs with no loops is a subcomplex $\mathsf{NL}(g,n)\subset \FT(\mathsf{Com})(g,n)$.  For each $g\geq 3$ we define the chain complex $\mathsf{GC}_2^g = (\Sigma^{2g} \mathsf{NL} (g,0))$. We then define $\mathsf{GC}_2 = \prod_g \mathsf{GC}_2^g$.  Note that \cite{WTw} uses cohomological conventions, so to recover exactly his $\mathsf{GC}_2$, one must take the cochain complex associated to the chain complex which we have called $\mathsf{GC}_2$ by negating the indices: $(V_i)^{op}= V_{-i}$.

In particular, a homogeneous element in $\mathsf{GC}_2$ is specified by a scalar multiple of an isomorphism class of a connected graph with no loops, all of whose vertices have valence 3 or greater, along with a mod 2 order on the set of edges. The degree of such a vector is $2g-E=E-2V+2$ and the differential is given by a sum of edge expansions.  The alternating action on the edges of a representative has the effect that any isomorphism class of a graph with parallel edges  vanishes.

\subsection{Wheel graphs}
As above we define the wheel graph $\omega_{2j+1}$ by connecting a new vertex to all the edges in a $2j+1$-gon.  As a convention the mod 2 edge order is chosen to coincide with \cite[Proposition 9.1]{WTw}, see Figure $\ref{fig:w5}$.  

With this convention the wheel graph $\omega_{2j+1}$ may be viewed as a degree $4j+2$ element of $\mathsf{B}(\mathsf{Com})(2j+1,0)$, having each vertex labeled by the commutative product of suitable valence.  Note that contracting any edge in a wheel graph produces parallel edges with commutative labels and hence:
\begin{lemma}\label{cycle} The wheel graph $\omega_{2j+1}$ is a cycle in the complex $\mathsf{B}(\mathsf{Com})(2j+1,0)$.
\end{lemma}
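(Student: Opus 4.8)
The plan is to show that the co-Feynman differential $\partial$ annihilates $\omega_{2j+1}$ term by term, using the vanishing of parallel-edged graphs already recorded in the discussion of $\mathsf{GC}_2$. First I would pin down the differential. Since $\mathsf{Com}$ is a strict modular operad with vanishing internal differential, Remark~\ref{d1rmk} applies: the structure maps $\mu_\gamma$ are supported on single-edge graphs, so in the bigrading \eqref{bigrading2} only the $e=1$ component survives and $\partial=\partial_1$. Concretely, $\partial(\omega_{2j+1})$ is the signed sum, over the $4j+2$ edges of the wheel, of the graphs obtained by contracting one edge and labeling the merged vertex by the commutative product of the appropriate valence, with signs carried by the wedge product on the $\mathfrak{K}^\ast$ factors.

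The key combinatorial step is to check that \emph{every} such contraction produces a graph with a pair of parallel edges. There are two cases. Contracting a rim edge of the polygon merges two adjacent rim vertices into a single vertex, which then carries both of their spokes to the hub, producing two parallel hub-edges. Contracting a spoke merges a rim vertex $v_i$ with the hub; the merged vertex is then joined to each polygon neighbor $v_{i\pm 1}$ both by the surviving rim edge and by the surviving spoke, again producing parallel edges. Since $2j+1\geq 3$, neither contraction creates a loop, and both leave every vertex genus equal to $0$, so the commutative labels remain nonzero.

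It then remains to invoke the vanishing principle already in place: a summand of $\mathsf{B}(\mathsf{Com})$ indexed by a graph $\gamma$ carrying two parallel edges is zero. Indeed the involution $\tau$ of $\gamma$ that interchanges the two parallel edges while fixing all remaining flags is an automorphism of the modular graph; as a transposition in $S_{E(\gamma)}$ it acts by $-1$ on $\mathfrak{K}^\ast(\gamma)$, while it acts trivially on $\mathsf{Com}(\gamma)$ because the commutative product is symmetric in its inputs. In the $Aut(\gamma)$-coinvariants $\mathfrak{K}^\ast(\gamma)\otimes_{Aut(\gamma)}\mathsf{Com}(\gamma)$ the corresponding vector therefore equals its own negative, hence vanishes over $\mathbb{Q}$. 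Applying this to each term gives $\partial(\omega_{2j+1})=0$.

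I expect the only genuine content to be the case analysis of the second paragraph, namely verifying that no contraction of a wheel edge can ever escape producing parallel edges; the sign and coinvariance argument is the formal mechanism already available. The mild point worth double-checking is the smallest case $2j+1=3$, where a rim contraction produces several parallel edges at once, but this does not affect the conclusion: the existence of a single transposing automorphism already forces the class to vanish.
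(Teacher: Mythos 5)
Your proof is correct and takes essentially the same approach as the paper, which disposes of this lemma with precisely the observation that contracting any edge of the wheel produces parallel edges with commutative labels, and that such graphs vanish in $\mathfrak{K}^\ast(\gamma)\otimes_{Aut(\gamma)}\mathsf{Com}(\gamma)$ by the alternating action on edges. Your write-up merely makes explicit the rim/spoke case analysis and the coinvariance mechanism that the paper leaves implicit in the sentence preceding the lemma.
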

%If we view $\omega_{2j+1}\in\mathsf{B}(\mathsf{Com})(2j+1,0)$ it is no longer true that $\partial(\omega_{2j+1})=0$, however the lemma does imply $\partial_1(\omega_{2j+1})=0$.

We remark that the vector spaces $\mathsf{B}(\mathsf{Com})(2j+1,0)$ and $ \FT(\mathsf{Com})(2j+1,0)$ are canonically isomorphic, via $\mathsf{Com}(g,n)\cong \mathsf{Com}(g,n)^\ast$, and so the wheel graph with the above conventions also specifies an element $\omega_{2j+1}^\ast\in \FT(\mathsf{Com})(2j+1,0)$.  Lemma $\ref{cycle}$ shows that a vector for which $\omega_{2j+1}^\ast$ appears with non-zero coefficient is not a boundary in $\FT(\mathsf{Com})(2j+1,0)$.

\section{Higher operations on Lie graph homology.}

\subsection{Lie graph homology}

Following \cite{GeK2}, after \cite{Kont2}, 
%A formal definition of what we call Lie graph homology may be given in terms of the Feynman transform.  
we define Lie graph homology to be the modular operad
$H_\ast(\FT(\Sigma s^{-1} \mathsf{Lie}))$, where $s$ denotes the (cyclic)  operadic suspension and $\Sigma$ denotes an shift up in degree.  In particular extension by zero makes $\Sigma s^{-1} \mathsf{Lie}$ a $\mathfrak{K}$-modular operad, and its Feynman transform is a modular operad.
Following \cite{CHKV} we will denote this modular operad by $H_\ast(\Gamma)$ and use the notation $H_\ast(\Gamma_{g,n})= H_\ast(\Gamma)(g,n)$.

For our purposes however, we will only require the following partial characterization of this modular operad in genus $\leq 1$. 

\begin{lemma}\label{gamma}\cite{CHKV}  The graded vector spaces $H_\ast(\Gamma_{g,n})$ form a modular operad with the following properties:
\begin{enumerate}
	\item The underlying cyclic operad $H_\ast(\Gamma_{0,-})$ is canonically isomorphic to the commutative (cyclic) operad.

\item As an $S_n$ module, 
\begin{equation*} H_i(\Gamma_{1,n}) \cong \begin{cases} V_{n-i,1^i} & \text{ if $i$ is even and } 0\leq i \leq n-1 \\
0 & \text{else} \end{cases}
\end{equation*} 

\item The modular operadic composition map
\begin{equation*}
H_0(\Gamma_{0,3})\tensor H_{d}(\Gamma_{1,n}) \to H_{d}(\Gamma_{1,n+1})
\end{equation*}
is injective.
\end{enumerate}
\end{lemma}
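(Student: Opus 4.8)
The three parts are of rather different character, and I would organize the proof to reflect this. Part (1) is the classical statement of cyclic Koszul duality between $\mathsf{Lie}$ and $\mathsf{Com}$; part (3) is a consequence of part (2) together with the branching rule for symmetric groups; and part (2) — the explicit genus-one computation — is where the real work lies (and is the content properly attributed to \cite{CHKV}). So I would establish (1) by a formal duality argument, take (2) as the main analytic input, and deduce (3) from (2) by representation theory.

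For part (1), the key observation is that the restriction of $\FT(\Sigma s^{-1}\mathsf{Lie})$ to genus $0$ is, by definition, built on trees and hence agrees with the cyclic (co)bar construction on the cyclic operad $\Sigma s^{-1}\mathsf{Lie}$; its homology therefore computes the cyclic Koszul dual of $\mathsf{Lie}$. Since $\mathsf{Lie}$ is a Koszul cyclic operad whose Koszul dual is $\mathsf{Com}$, and since the operadic suspension $\Sigma s^{-1}$ is chosen precisely to normalize the internal grading, the homology is concentrated in degree $0$ and is canonically $\mathsf{Com}$ as a cyclic operad. The only genuine points to verify are that the \emph{cyclic} (not merely operadic) structure is preserved under the duality and that the suspension conventions indeed land in degree $0$; both are standard.

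Part (2) is the main obstacle. Because $\Sigma s^{-1}\mathsf{Lie}$ is supported in genus $0$, every graph decorating $\FT(\Sigma s^{-1}\mathsf{Lie})(1,n)$ has $\beta_1 = 1$; that is, it is a single embedded cycle with trees attached. I would first use part (1) to resolve the tree directions: contracting the genus-$0$ appendages replaces the hanging trees with commutative decorations, so that the genus-one complex becomes quasi-isomorphic to a ``necklace'' complex of cyclically arranged, commutatively labeled vertices carrying the $n$ legs. The homology of this cyclic complex I expect to identify with the even-degree part of the exterior algebra on the standard representation, via the classical isomorphism $\bigwedge^i V_{n-1,1}\cong V_{n-i,1^i}$. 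The vanishing in odd degrees and the exact range $0\le i\le n-1$ would then be pinned down by an Euler-characteristic computation together with an acyclicity/spectral-sequence argument (this is the route of \cite{CHKV}, via the Leray--Serre spectral sequence of the relevant group extension). The delicate part is carrying out the reduction to necklaces compatibly with both the $S_n$-action and the Feynman differential, and securing the odd-degree vanishing, which requires honest input rather than formal manipulation.

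For part (3), part (1) gives $H_0(\Gamma_{0,3})\cong\mathsf{Com}(0,3)\cong\mathbb{Q}$, so the composition map is simply the $S_n$-equivariant operation that glues a trivalent genus-$0$ vertex onto one leg, i.e. ``splits a leg in two.'' Using part (2), for $d$ even with $0\le d\le n-1$ the branching rule reads $\mathrm{Res}^{S_{n+1}}_{S_n} V_{n+1-d,1^d}\cong V_{n-d,1^d}\oplus V_{n+1-d,1^{d-1}}$, so the irreducible source $H_d(\Gamma_{1,n})\cong V_{n-d,1^d}$ occurs in the target $H_d(\Gamma_{1,n+1})\cong V_{n+1-d,1^d}$ with multiplicity exactly one. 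By Schur's lemma the equivariant map is then either zero or injective, and it suffices to rule out the zero map. I would do this by exhibiting a single class in the image that is nonzero in homology: using the explicit necklace generators from part (2), gluing a trivalent vertex to a leg lengthens a nonzero necklace cycle to a nonzero necklace cycle, yielding the required nonvanishing and hence injectivity.
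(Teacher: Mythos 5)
The first thing to say is that the paper contains no proof of this lemma at all: the citation \cite{CHKV} in the statement is the paper's entire treatment, and all three items are imported as a black box (the only place the paper engages with them is in deducing consequences, e.g.\ Lemma \ref{relations} and Corollary \ref{loopcor}). So there is no in-paper argument to compare against, and the question is only whether your sketch is sound. Your part (1) is fine in outline (genus~$0$ of the Feynman transform is the cyclic bar construction, and $\mathsf{Lie}$ is cyclically Koszul with dual $\mathsf{Com}$), and for part (2) you correctly identify the genus-one computation as the genuine content and defer it to \cite{CHKV}, which is in effect what the paper itself does.

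Part (3), however, has a real gap. The gluing map $\circ_e\colon H_0(\Gamma_{0,3})\otimes H_d(\Gamma_{1,n})\to H_d(\Gamma_{1,n+1})$ is \emph{not} $S_n$-equivariant: the gluing consumes one leg of the genus-one piece and one leg of the trivalent vertex, so the natural equivariance group is $S_2\times S_{n-1}$ (this is exactly the equivariance the paper uses in Equation \ref{adjointform} in the proof of Lemma \ref{relations}, with $t=2$). Under $S_2\times S_{n-1}$ the source is not irreducible: by branching it is $V_2\boxtimes\left(V_{n-1-d,1^d}\oplus V_{n-d,1^{d-1}}\right)$ for $0<d<n-1$, and a Littlewood--Richardson computation shows both summands occur, each with multiplicity one, in $Res^{S_{n+1}}_{S_2\times S_{n-1}}\left(V_{n+1-d,1^d}\right)$. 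Schur's lemma therefore gives two independent scalars, not one, and the dichotomy ``zero or injective'' is not available; you must rule out the vanishing of each isotypic component separately. Moreover, your proposed way of ruling out vanishing --- gluing a trivalent vertex onto a nonzero necklace cycle ``yields a nonzero necklace cycle'' --- is precisely the assertion to be proven: the glued chain is certainly a cycle, but nothing in the sketch excludes its being a boundary in the larger complex. This nonvanishing (on both components) is genuine input from the explicit description of the classes in \cite{CHKV} and cannot be recovered from part (2) by the formal argument you give.
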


Notice that $H_{2j}(\Gamma_{1,2j+1})$ is the alternating representation.  We fix generators $\alpha_{2j+1} \in H_{2j}(\Gamma_{1,2j+1})$ once and for all. We will also write $m_t\in \mathsf{Com}(t)= H_0(\Gamma_{0,t+1})$ for the commutative product.

We may iterate the modular operadic composition map above to form
\begin{equation}\label{circe}
H_0(\Gamma_{0,t+1})\tensor H_{d}(\Gamma_{1,n}) \stackrel{\circ_e}\longrightarrow H_{d}(\Gamma_{1,n+t-1}).
\end{equation}
Here $\circ_e$ is the modular operadic composition which corresponds to gluing along a tree with one edge adjacent to vertices of type $(0,t+1)$ and $(1,n)$.  To be precise, this composition is only well defined after a choice of labeling of the $t+n-1$ legs of the tree by the set $\{1\cdc t+n-1\}$.  We fix the convention that $\circ_e$ corresponds to labeling the genus $0$ vertex by $\{1\cdc t\}$ and the genus 1 vertex by $\{t+1\cdc t+n-1\}$.  Observe that repeated application of Lemma $\ref{gamma} (3)$ implies that $m_t\circ_e\alpha_{2j+1}\neq 0$.

 We now calculate relations between compositions in the modular operad $H_\ast(\Gamma)$.
\begin{lemma} \label{relations} The non-zero homology class $m_t\circ_e \alpha_{2j+1} \in H_{2j}(\Gamma_{1,t+2j})$ satisfies 
\begin{equation*}
\sum_{i=1}^{t+1}(i,t+1) (m_t\circ_e \alpha_{2j+1}) =0,
\end{equation*}	
where $(i,t+1)\in S_{t+2j}$ denotes a transposition. 
\end{lemma}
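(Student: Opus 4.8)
The plan is to prove the relation purely representation-theoretically, leaning on the explicit $S_{t+2j}$-module identification supplied by Lemma \ref{gamma}(2). First I would record that, taking $n=2j+1$ and $d=2j$ in \eqref{circe}, the class $X := m_t\circ_e\alpha_{2j+1}$ lands in $H_{2j}(\Gamma_{1,t+2j})$, and that by Lemma \ref{gamma}(2) (with $i=2j$, so $n-i=t$) this entire homology group is the single irreducible hook representation $V_{(t,1^{2j})}$ of $S_{t+2j}$. Since the group is irreducible of hook type, no decomposition is needed: $X$ automatically lies in $V_{(t,1^{2j})}$. At the same time I would note that $X$ is invariant under the subgroup $S_{\{1,\dots,t\}}\leq S_{t+2j}$: the factor $m_t\in\mathsf{Com}(t)$ spans the trivial $S_t$-representation, and under the convention fixed for $\circ_e$ the inputs of the genus $0$ vertex are exactly the legs $\{1,\dots,t\}$, so equivariance of the modular composition forces $X$ to be symmetric in these legs.

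The bridge step is to recognize the stated sum of transpositions as a full symmetrizer. The elements $\{(i,t+1):i=1,\dots,t\}\cup\{\mathrm{id}\}$ are a transversal of $S_{\{1,\dots,t\}}$ in $S_{\{1,\dots,t+1\}}$ (they send $t+1$ to the $t+1$ distinct values $1,\dots,t+1$), so for any $S_{\{1,\dots,t\}}$-invariant vector $v$ one has $\sum_{i=1}^{t+1}(i,t+1)\,v=\tfrac{1}{t!}\sum_{\sigma\in S_{\{1,\dots,t+1\}}}\sigma\, v$. Applying this to $X$ (legitimate by the invariance observed above, and noting the $i=t+1$ term is the identity), the relation to be proven becomes equivalent to the vanishing of the symmetrization of $X$ over the legs $\{1,\dots,t+1\}$.

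I would finish with the standard fact that an irreducible $V_\lambda$ carries no nonzero $S_T$-invariants as soon as $|T|>\lambda_1$. Here $\lambda=(t,1^{2j})$ has first row length $\lambda_1=t$, while $|T|=t+1$; by Pieri's rule the trivial representation of $S_{\{1,\dots,t+1\}}$ occurs in $V_{(t,1^{2j})}$ only if the diagram admits a horizontal strip of length $t+1$, which is impossible since it has only $t$ columns. Hence the symmetrization of $X$ vanishes, and therefore $\sum_{i=1}^{t+1}(i,t+1)X=0$.

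I do not expect a serious obstacle: the conceptual content is entirely contained in identifying $H_{2j}(\Gamma_{1,t+2j})$ as the hook $V_{(t,1^{2j})}$ and in the elementary vanishing of invariants under over-symmetrization. The only genuine care required is bookkeeping — verifying the coset transversal identity and confirming, from the leg-labelling convention for $\circ_e$, that $X$ is symmetric precisely in the legs $\{1,\dots,t\}$ so that the transposition sum collapses to the full symmetrizer.
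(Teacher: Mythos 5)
Your proof is correct, but it runs in the opposite direction from the paper's. The paper works on the source side of the composition: it passes to the adjoint map out of $Ind^{S_{t+2j}}_{S_t\times S_{2j}}\bigl(H_0(\Gamma_{0,t+1})\tensor H_{2j}(\Gamma_{1,2j+1})\bigr)$, decomposes this induced representation as $V_{t+1,1^{2j-1}}\oplus V_{t,1^{2j}}$ by Littlewood--Richardson, and then shows by an explicit Young-symmetrizer computation in the group ring (for the tableau of shape $(t+1,1^{2j-1})$ with $t+1$ in the pivot position) that $z=(\mathrm{id}+(1,t+1)+\cdots+(t,t+1))(m_t\tensor\alpha_{2j+1})$ generates the copy of $V_{t+1,1^{2j-1}}$; since the target is irreducible of type $V_{t,1^{2j}}$ by Lemma \ref{gamma}(2), Schur's lemma forces $\circ_e(z)=0$. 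You instead work entirely in the target: you note that $X=m_t\circ_e\alpha_{2j+1}$ is $S_{\{1,\dots,t\}}$-invariant (equivariance of $\circ_e$ plus full symmetry of $m_t$), that the transposition sum therefore equals $\tfrac{1}{t!}$ times the full $S_{\{1,\dots,t+1\}}$-symmetrization of $X$ (coset transversal identity), and that the hook $V_{t,1^{2j}}$ has no nonzero $S_{t+1}$-invariants since its first row has length $t<t+1$ (Pieri). All steps check out, including the transversal identity and the invariant-vanishing criterion, and both arguments ultimately rest on the same input, Lemma \ref{gamma}(2). Your route is shorter and avoids Young symmetrizers and the group-ring embedding altogether; the paper's route buys a little more, namely that the kernel of the adjoint $\circ_e$ is exactly the $S_{t+2j}$-orbit of $z$ (a single complementary irreducible), and it exhibits the relation in the ``permutable'' form in which it is later reused repeatedly in the proof of Lemma \ref{notzero}.
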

\begin{proof}
The map $\circ_e$ of Equation $\ref{circe}$ is $S_{t}\times S_{2j}$ equivariant, where the target carries the restricted action along the standard inclusion $S_{t}\times S_{2j}\hookrightarrow S_{t+2j}$.   By abuse of notation we also write $\circ_e$ for the adjoint:
\begin{equation}\label{adjointform}
 Ind^{S_{t+2j}}_{S_{t}\times S_{2j} } (H_0(\Gamma_{0,t+1})\tensor H_{2j}(\Gamma_{1,2j+1})) \stackrel{\circ_e}\to H_{2j}(\Gamma_{1,2j+t}).
\end{equation}
Using the Littlewood-Richardson rule (\cite[p.456]{FH}) we compute the irreducible decomposition of the source of Equation $\ref{adjointform}$ to be $V_{t+1,1^{2j-1}}\oplus V_{t,1^{2j}}$.

By Lemma $\ref{gamma}$, the target of $\circ_e$ is an irreducible $S_{t+2j}$-representation of type $V_{t, 1^{2j}}$.  Thus, any vector $z$ for which $\mathbb{Q}[S_{t+2j}]\cdot z\cong V_{t+1,1^{2j-1}}$ must be in the kernel of $\circ_e$.  To produce such a vector $z$ we embed the problem in the group ring.  That is, consider the $S_t\times S_{2j}$ equivariant map $H_0(\Gamma_{0,t+1})\tensor H_{2j}(\Gamma_{1,2j+1})  \to  \mathbb{Q}[S_{t+2j}]$ defined by 
\begin{equation}\label{y}
 m_t\tensor \alpha_{2j+1}  \mapsto  y:= (\sum_{\sigma \in S_{t}} \sigma)( \sum_{\sigma \in S_{\{t+1,\dots t+2j\}}} sgn(\sigma) \sigma).
\end{equation}
Form the Young diagram of shape $t+1,1^{2j-1}$ labeled numerically right to left, then down.  So $t+1$ is in the pivot position.  Call this tableau $\zeta$.  Its associated Young symmetrizer $c_\zeta$ is
\begin{equation*}
c_\zeta = (id+(1,t+1)+(2,t+1)+\dots+(t,t+1))y.
\end{equation*}
Since $y$ is in the image of the map defined in Equation $\ref{y}$, $c_\zeta$ is in the image of the adjoint morphism from the induced representation.  By construction $c_\zeta$ generates a copy of $V_{t+1,1^{2j-1}}$ under left multiplication by $\mathbb{Q}[S_{t+2j}]$, hence so does
\begin{equation*}
z:=(id+(1,t+1)+(2,t+1)+\dots+(t,t+1)) (m_t\tensor \alpha_{2j+1}). 
\end{equation*}
Thus, this $z$ is in the kernel of the $\circ_e$ in Equation $\ref{adjointform}$ as desired.   Moreover the kernel is spanned by the $S_{t+2j}$ orbit of this $z$.
\end{proof}

\subsection{The weak semi-classical modular operad $\wscmo$}

In this section we endow the genus $\leq 1$ spaces of $H_\ast(\Gamma_{g,n})$
with higher operations.  The result will be a weak modular operad which we denote $(\wscmo, \mu)$.
 
As stable $\mathbb{S}$-modules we define
\begin{equation*}
\wscmo(g,n) = \begin{cases}
H_\ast(\Gamma_{g,n}) & \text{ if } g < 2 \\
0 & \text{ if } g\geq 2 
\end{cases}
\end{equation*}

The operations $\mu_\gamma$ are defined as follows. % First $\mu(\gamma)$ is necessarily $0$ if $g(\gamma)\geq 2$.  
We define $\mu_\gamma=0$ unless one of the two mutually exclusive conditions is met:
\begin{itemize}
	\item $\gamma$ has genus $< 2$ and only one edge, or
	\item $\gamma$ has genus 1, and the underlying leg free graph of $\gamma$ is a $2j+1$-gon, for some $j\geq 1$.
\end{itemize}

In the first case we define  $\mu_\gamma$ to be the operation induced by the modular operad structure on $H_\ast(\Gamma)$.  In the second case we proceed as follows.

Let $\mathsf{p}_{2j+1}$ be the standard trivalent $2j+1$-gon.  By this we mean the modular graph formed by attaching a leg to each vertex of a $2j+1$-gon.  The vertices have genus label $0$.  The leg labels are in the dihedral order, and we give this modular graph an edge ordering $e_1<\dots <e_{2j+1}$ such that the $i^{th}$ edge connects the vertices adjacent to flags $i$ and $i+1$ (mod $2j+1$).

Observe that $\wscmo(\mathsf{p}_{2j+1})\cong k$.  We define
\begin{equation*}
\mu_{\mathsf{p}_{2j+1}} \colon \wscmo(\mathsf{p}_{2j+1})\tensor \mathfrak{K}(\mathsf{p}_{2j+1}) \to \wscmo(1, 2j+1)
\end{equation*}
by  $\mu_{\mathsf{p}_{2j+1}}(1\tensor e_1\wedge \dots \wedge e_{2j+1})=\alpha_{2j+1}$.

\begin{lemma} \label{H} The above operations extend to a unique weak modular operad structure on $\wscmo$.
\end{lemma}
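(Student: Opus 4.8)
The plan is to check, one axiom at a time, the three conditions of Definition \ref{wmodef} — $S_n$-equivariance, $Aut(\gamma)$-coinvariance, and the differential condition $\sum_N \mu_{\gamma/N}\circ_N\mu_{\hat N}=0$ — while showing that the data already specified (the one-edged operations together with $\mu_{\mathsf{p}_{2j+1}}$) extends in exactly one way. I would first dispose of the one-edged operations: by construction these are the structure maps of the genuine modular operad $H_\ast(\Gamma)$, hence automatically equivariant, coinvariant, and associative, and the differential condition restricted to two-edged graphs is precisely the modular-operad associativity of $H_\ast(\Gamma)$.

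Next I would treat the polygon operation itself. Since the legs of $\mathsf{p}_{2j+1}$ are labelled and each vertex carries a unique leg, every vertex is pinned and $Aut(\mathsf{p}_{2j+1})$ is trivial, so coinvariance on the chosen representative is vacuous. The real content is that the $S_n$-equivariant transport of $\mu_{\mathsf{p}_{2j+1}}$ to the other leg-labellings is consistent, i.e.\ that the dihedral relabellings stabilising the isomorphism class act compatibly on source and target. Here I would use that $\alpha_{2j+1}$ spans the sign representation and that the order $e_1<\dots<e_{2j+1}$ is adapted to the cycle, so that the edge-order sign on $\mathfrak{K}(\mathsf{p}_{2j+1})$ and the sign-representation value on $\alpha_{2j+1}$ agree (both $+1$ for a rotation and $(-1)^j$ for a reflection); the two signs therefore cancel and the transport is well defined.

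Uniqueness and the genus-$1$ cases of the differential condition I would handle together by an edge-count bookkeeping. As $\mu_\gamma$ is supported only on one-edged graphs and on odd polygons, a term $\mu_{\gamma/N}\circ_N\mu_{\hat N}$ is nonzero only when $|E(\hat N)|$ and $|E(\gamma/N)|$ each equal $1$ or an odd number $\geq 3$; their sum $|E(\gamma)|$ is then even, and two odd-polygon factors are excluded for genus reasons. The first nontrivial family is an odd polygon carrying one extra commutative edge: exactly two nests contribute, namely contracting the cycle (with $\hat N$ the trivalent polygon, output $\alpha_{2j+1}$, and $\gamma/N$ a one-edged composition) and contracting the extra edge (with $\gamma/N$ a higher-valence odd polygon). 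The differential condition then \emph{determines} the value of $\mu$ on every higher-valence odd polygon to be a commutative decoration of $\alpha_{2j+1}$, namely $\pm\,m_t\circ_e\alpha_{2j+1}$, which forces the extension and establishes uniqueness; applying the same analysis to longer commutative pendants shows these forced values are mutually consistent, the required cancellations following from associativity and commutativity of the products $m_t$ inside $H_\ast(\Gamma)$.

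The main obstacle is the genuinely multi-term family, typified by an \emph{even} polygon, where the cycle supports no polygon operation (it is even) but contracting each single cycle edge produces a higher-valence odd polygon labelled by a reattached copy of $m_t\circ_e\alpha_{2j+1}$. No compensating ``expand-then-merge'' term exists, so the condition becomes a signed sum of such copies that must vanish identically. This is exactly the content of Lemma \ref{relations}: that $m_t\circ_e\alpha_{2j+1}$ lies in the kernel of the symmetrizer separating $V_{t,1^{2j}}$ from $V_{t+1,1^{2j-1}}$, so that the whole $S_n$-orbit of these relations holds. The delicate point, and where I expect the real work to lie, is the sign bookkeeping: one must track the $\mathfrak{K}$-wedge signs contributed by each edge contraction and verify that the resulting signed nest-sum matches the alternating sum $\sum_{i=1}^{t+1}(i,t+1)\,(m_t\circ_e\alpha_{2j+1})$ of Lemma \ref{relations} and its permutations. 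Once the signs are shown to line up, every remaining differential condition is either this relation, an $S_n$-translate of it, or a consequence of commutative associativity, which completes the verification.
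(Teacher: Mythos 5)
Your overall architecture tracks the paper's proof closely: the one- and two-edge cases are exactly the strict modular operad structure and its associativity; your parity check for transporting $\mu_{\mathsf{p}_{2j+1}}$ along relabellings (rotations act by $+1$ on both legs and edges, reflections by $(-1)^j$ on both) is the paper's ``matching parity'' observation; and your derivation of the values on higher-valence odd polygons from the two-term differential condition on an odd polygon with one pendant edge is precisely the paper's Equation \ref{d} and the induction built on it. Where you genuinely diverge is the even-polygon case, which you correctly isolate as the crux. The paper does \emph{not} use Lemma \ref{relations} here at all (that lemma enters later, in the proof of Lemma \ref{notzero}). Instead it argues at the level of maps: the composite in Diagram \ref{dsquared} is equivariant for the rotation subgroup $\mathbb{Z}_{2n}\subset S_{2n}$, its source is the one-dimensional representation on which the $2n$-cycle acts by $-1$, and a character count shows that $Res_{\mathbb{Z}_{2n}}^{S_{2n}}(V_{2,1^{2n-2}})$ contains no copy of that representation; hence the composite vanishes identically, with no sign bookkeeping whatsoever. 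A second commutative diagram then reduces the case of $t>2n$ legs to $t=2n$.

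The gap in your proposal is that you defer exactly the step that carries all the content. Your plan requires showing that the signed nest-sum, with its $\mathfrak{K}$-wedge signs and the signs forced by Equation \ref{d}, is a linear combination of $S_{2n}$-translates of the relation of Lemma \ref{relations}; you write ``once the signs are shown to line up'' but never show it, and this is not a formality, because the vanishing is genuinely sign-sensitive. Already for the square ($n=2$, $t=4$): writing $T_{ab|cd}$ for the class with $m_2$ on legs $a,b$ and $\alpha_3$ on legs $c,d$, the relation is $z=T_{12|34}+T_{23|14}+T_{13|24}$, the kernel of $\circ_e$ is the span of its $S_4$-orbit, and one checks that $T_{12|34}+T_{23|14}+T_{34|12}-T_{14|23}=z+(24)z$ lies in that kernel while the ``all plus'' sum $T_{12|34}+T_{23|14}+T_{34|12}+T_{14|23}$ does not. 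So your argument stands or falls on the computation you postponed, and for general $n$ you must in addition exhibit the explicit combination of translates of $z$ (and handle $t>2n$, where the merged vertices carry varying commutative decorations, by some further reduction such as the paper's grafting diagram). The claim you need is true --- a posteriori the differential vector must lie in $\ker(\circ_e)$, which is the $V_{3,1^{2n-3}}$-isotypic piece spanned by the orbit of $z$ --- but your route establishes this only modulo the deferred sign analysis, whereas the paper's equivariance argument is precisely the device that makes all such sign-tracking unnecessary.
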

\begin{proof}  
	We refer to Definition $\ref{wmodef}$.  The $S_n$ equivariance defines $\mu_{\hat{\mathsf{p}}}$ for any other edge ordered trivalent polygon $\hat{\mathsf{p}}$. One easily checks that this definition is not over-prescribed, since symmetries of a $2j+1$-gon induce permutations of the edges and the legs which have matching parity.  %Thus applying such a symmetry produces the sign of the given permutation twice (once from the factor of $\mathfrak{K}(\mathsf{p})$ and once since the target is alternating), and hence has no effect.

We then want to show that if $\mathsf{P}$ is a non-trivalent graph whose underlying leg free graph is a $2j+1$-gon, that $\mu_{\mathsf{P}}$ is determined by the above operations.  For this we induct on the number of non-trivalent vertices.  First suppose that this number is 1, at a vertex $v$ of $\mathsf{P}_1:=\mathsf{P}$.

Let $\gamma$ be the graph formed by blowing up $v$ to separate the two flags which belong to edges of the polygonal subgraph from the rest of the flags at $v$; see Figure $\ref{fig:polygons}$.  In particular $\gamma$ has $2j+2$ edges, $2j+1$ of which form a polygon $\mathsf{P}_0$ with trivalent vertices.  Let $N$ be a nest on $\gamma$.  By the above definition, the composition $\mu_{\gamma/N}\circ_N \mu_{\hat{N}}$ will be zero unless $N = \{e\}$ or $\hat{N}= \mathsf{P}_0$ (pictured blue and red in Figure $\ref{fig:polygons}$).

Thus, applying the differential condition of Definition $\ref{wmodef}$ with internal differential $d=0$, it must be the case that  %\footnote{	There is a sign check here, but since you define the new operation by collapsing a single edge in the last position, the sign is independent of the order.  Actually, the sign can be determined explicitly because the kappa part is +, and the gamma/N part is 2j+1, so you get +=+.}
	\begin{equation}\label{d}
0=\mu_{{\mathsf{P}_1}} \circ \mu_{e} + \mu_{\gamma/{\mathsf{P}_0}} \circ \mu_{\mathsf{P}_0}.
	\end{equation}	
Here we write $\circ_e= \mu_e$ for the modular operadic composition map which contracts the edge $e$.  But this map $\mu_e\colon\wscmo(\gamma)\to \wscmo(\mathsf{P})$ is simply a composition in the commutative operad and so is an isomorphism, $\wscmo(\gamma)\stackrel{\cong}\to \wscmo(\mathsf{P})$.  Thus Equation $\ref{d}$ uniquely determines $\mu_{\mathsf{P}_1}$.

\begin{figure}
	\centering
	\includegraphics[scale=.8]{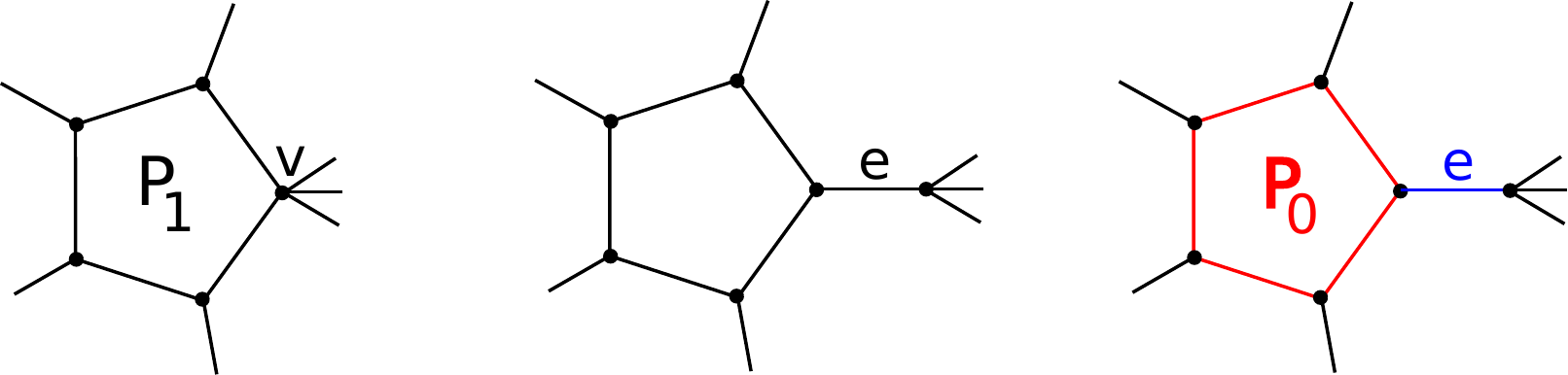}
	\caption{The Massey project associated to $\mathsf{P}=\mathsf{P}_1$ is defined by expanding the edge $e$, applying the Massey product $\mu_{\mathsf{P}_0}$ and then the modular operadic composition $\circ_e$.  Leg labels are suppressed in the figure.}
	\label{fig:polygons}
\end{figure}

For the induction step we repeat the above argument, reducing the number of non-trivalent vertices by one at each step.  The fact that the operation is independent of the choice of order of the non-trivalent vertex follows from the fact that $H_\ast(\Gamma)$ is a (strong) modular operad.

Thus there is at most one weak modular structure on $\wscmo$ extending the above operations.  Conversely the maps defined above have the requisite degree and equivariance, so it remains to show that the differential condition
\begin{equation}\label{dcond}
\ds\sum_{N \text{ on } \gamma}  \mu_{\gamma/N}\circ_N \mu_{\hat{N}}=0 
\end{equation}
is satisfied for every modular graph $\gamma$.

If $\gamma$ has total genus $\geq 2$ or has fewer than $2$ edges, all terms in Equation $\ref{dcond}$ are $0$ by definition so there is nothing to check.
If $\gamma$ has exactly two edges, then Equation $\ref{dcond}$ is merely the associativity axiom for the (strict) modular operad structure on $H_\ast(\Gamma_{g,n})$.

So we now assume $\gamma$ has more than two edges and has total genus $\leq 1$.  Let $N$ be a nest on $\gamma$.  Either $N$ or $\gamma/N$ has two or more edges, thus for $\mu(\gamma/N)\circ_N\mu(\hat{N})$ to be non-zero requires that one of $\gamma/N$ and $N$ is a polygon with an odd number of sides and the other must be a lone edge.  In particular $\gamma$ must have an even number of edges, have first betti number 1, and hence only genus 0 vertices.  There are two cases for such a $\gamma$.  Either its lone cycle has an odd number of edges, in which case there is an additional edge pointing outward or its lone cycle has an even number of edges, in which case the leg free graph underlying $\gamma$ must be a $2n$-gon.

The first case follows as above.  In particular, suppose $\gamma$ has $2n$ edges and its lone cycle is of length $2n-1$.  Let $e$ be the unique edge of $\gamma$ which is not in the cycle. This edge is connected to vertices $v$ and $w$, with $v$ belonging to the polygon and $w$ not.  If $v$ is a trivalent vertex in $\gamma$, then the differential condition was verified above.   If $v$ is not trivalent, the differential condition is verified by a double iteration of Equation $\ref{d}$.

So we now consider the second case.  Suppose $\gamma = \mathsf{P}_{t}$ is a polygon with, $2n \geq 4$ sides whose vertices have genus 0, and which has $t\geq 2n$ legs labeled $\{1\cdc t\}$, with at least one leg at each vertex. The only non-zero terms in the differential condition are given by first choosing $N$ to be a single edge.
% or $\gamma$, so that $\gamma/N =  \mathsf{P}_{2n}/e$.  
Thus the differential condition in this case is:
$ 0 = \sum_{e\in \mathsf{P}_{t}} \mu_{\mathsf{P}_{t}/e}\circ \mu_{e}.
$
This condition can be rephrased as saying the following composite vanishes:
\begin{equation}\label{dsquared}
\xymatrix{ \mathsf{B}(\wscmo)(1, t)^{2n, 0} \supset
	\wscmo(\mathsf{P}_{t})\tensor \mathfrak{K}^\ast(\mathsf{P}_{t})  \ar[d]^{\partial_1}  && \\  	\mathsf{B}(\wscmo)(1, t)^{2n-1, 0} \supset
	\ds\bigoplus_{e \in \mathsf{P}_{t}} \wscmo(\mathsf{P}_{t}/e) \ar[rr]^{ \ \ \ \ \ \ \ \ \ \ \ \ \ \  \ \ \ \ \ \ \ \ \ \ \ \oplus_{e} \mu_{\mathsf{P}_{t}/e}}\tensor\mathfrak{K}^{\ast}(\mathsf{P}_{t}/e) 
  &&  H_{2n-2}(\Gamma_{1,t}) 
}
\end{equation}
Let us first consider the case $t=2n$.  Since the maps in Diagram $\ref{dsquared}$ are $S_{2n}$-equivariant it is sufficient to consider the case that the legs of $\mathsf{P}_{2n}$ are labeled in the dihedral order.  The source of this diagram is closed under the $\mathbb{Z}_{2n}$ action restricted along the standard inclusion $\mathbb{Z}_{2n} \cong \langle \sigma:=(12 \dots 2n)\rangle \subset S_{2n}$.
 Denoting this $1$-dimensional $\mathbb{Z}_{2n}$ representation $\op{E}:=Res^{S_{2n}}_{\mathbb{Z}_{2n}}(\wscmo(\mathsf{P}_{2n})\tensor \mathfrak{K}^\ast(\mathsf{P}_{2n}))$, we compute its character $\chi_{\op{E}}(\sigma)=sgn(\sigma)=-1$.  This in turn determines the isomorphism type of the irreducible $\mathbb{Z}_{2n}$-representation $\op{E}$; namely the $2n$ cycle $\sigma$ acts by $-1$.

To show the composite in Diagram $\ref{dsquared}$ is $0$, it thus suffices to show that there are no copies of this irreducible representation appearing in $Res_{\mathbb{Z}_{2n}}^{S_{2n}}(H_{2n-2}(\Gamma_{1,2n}))$.  The irreducible representations of $\mathbb{Z}_{2n}$ over the algebraic closure $\overline{\mathbb{Q}}$ are all $1$-dimensional and are given by letting $\sigma$ act by multiplication of a root of $x^{2n}-1$.  Let $\omega = e^{i\pi/n}$ and write $W_i$ for the irreducible representation corresponding to multiplication by $\omega^i$.   Let $V_{2n}\oplus V_{2n-1,1}$ be the permutation representation of $S_{2n}$.  One easily calculates its restriction 
\begin{equation*}
Res_{\mathbb{Z}_{2n}}^{S_{2n}}(V_{2n}\oplus V_{2n-1,1}) = %\bigoplus_{i=0}^{2n-1} a_i W_i = 
\bigoplus_{i=0}^{2n-1} W_i
\end{equation*}
%by imposing the requirements that the non-negative integers $a_i$ satisfy
%\begin{equation*}
%0=\chi_{(V_{2n}\oplus V_{2n-1,1})}(\sigma)= \sum a_i \omega^i \ \ \ \ \text{ and } \ \ \ \  2n=\chi_{(V_{2n}\oplus V_{2n-1,1})}(id)= \sum a_i.
%\end{equation*}
%Hence each $a_i=1$.  
% follows since each omega^j is a root of sum a_ix^i except j=0
Since $V_{2n-1,1}\tensor V_{1^{2n}} \cong V_{2,1^{2n-2}}$, the number of copies of $\op{E} \cong W_n$ appearing in $Res_{\mathbb{Z}_{2n}}^{S_{2n}}(V_{2,1^{2n-2}})$ is the number copies of $W_0$ appearing in $Res_{\mathbb{Z}_{2n}}^{S_{2n}}(V_{2n-1,1})$ which is  $1-1 = 0$, as desired.

Whence the case $t=2n$.  Now suppose $t>2n$.  Choose an ordering of the vertices of $\mathsf{P}$ compatible with the dihedral ordering and let $T_i$ be the set of flags adjacent to the $i^{th}$ vertex.  In particular $\{1\cdc t\} = \sqcup T_i$.  Let $\hat{T}_i$ be the set $T_i$ along with an added basepoint, called the root.

Consider the following diagram:
\begin{equation*}
\xymatrix{ 
 \ar[r]^{\ \ \ \ \ \ \ \ \cong} 
\left(\wscmo(\mathsf{P}_{2n})\tensor \mathfrak{K}^\ast(\mathsf{P}_{2n})\right) 
\tensor\left( \ds\bigotimes_{i=1}^{2n} \wscmo(0,\hat{T}_i)\right)  \ar[d]^{\partial_1\tensor id} 
 & 
 \wscmo(\mathsf{P}_t)\tensor \mathfrak{K}^\ast(\mathsf{P}_{t}) \ar[d]^{\partial_1}	 
	\\ 
		 \ar[r]^{\ \ \ \ \ \ \ \ \ \ \ \ \cong}
	\ds\bigoplus_{e \in \mathsf{P}_{2n}} \wscmo(\mathsf{P}_{2n}/e)  \tensor\mathfrak{K}^{\ast}(\mathsf{P}_{2n}/e) \tensor\left( \bigotimes_{i=1}^{2n} \wscmo(0,\hat{T}_i)\right)   \ar[d]^{ \oplus_{e} \mu_{\mathsf{P}_{2n}/e} \tensor id } 
	 & 
	 \bigoplus_{e\in \mathsf{P}_t}  \wscmo(\mathsf{P}_t/e)\tensor \mathfrak{K}^\ast(\mathsf{P}_{t}/e)
	  \ar[d]^{ \oplus_{e} \mu_{\mathsf{P}_{t}/e}}
	\\
	  \ar[r]^{\ \ \ \ {\sum\circ_i}}
H_{2n-2}(\Gamma_{1,2n}) \tensor (\tensor_i H_{0}(\Gamma_{0,\hat{T}_i}))
	  & 
		 H_{2n-2}(\Gamma_{1,t}) 
}
\end{equation*}
The right hand side of this diagram is exactly Diagram $\ref{dsquared}$.  The left hand side of this diagram is Diagram $\ref{dsquared}$ in the prior case $t=2n$, then tensored with the 1-dimensional, trivial $\mathbb{Z}_{2n}$-representation $\bigotimes_{i=1}^{2n} \wscmo(0,\hat{T}_i)$.  The horizontal arrows are contractions using the modular operad structure along the graph identifying the root of $\hat{T_i}$ with leg $i$ of $\mathsf{P}_{2n}$.

The commutativity of the top square can be seen just by looking at each summand -- both routes give the same graph with commutative labels.  The commutativity of the bottom square follows immediately from the definition of the Massey product associated to each ${{\mathsf{P}_t}/e}$. 

Since the left hand side of the diagram vanishes, by the $t=2n$ case considered above, and since the top horizontal arrow is an isomorphism, the right hand side of the diagram also vanishes, as desired.
\end{proof}

Viewing $\mathsf{Com}$ as a modular operad, as in Subsection $\ref{com}$, we see immediately that there is a  level-wise surjective morphism of weak modular operads $\wscmo\to \mathsf{Com}$.  Taking the weak Feynman transform, we have a level-wise injective morphism of $\mathfrak{K}$-twisted modular operads $\FT(\mathsf{Com})\hookrightarrow \FT(\wscmo)$.

\section{Proof of the main results.}
Let us regard the wreath product $S_2\wr S_n$ as follows.  Its underlying set is $S_2^n\times S_n$.  To an element $((\alpha_1\cdc \alpha_n), \tau)$ in this set we associate a permutation in $S_{2n}$ by first acting by $\tau$ on the ordered set $\{1,2\},\{3,4\} \cdc \{2n-1 , 2n\}$ of size $n$ and then acting by $\alpha_i$ on the ordered set $\{2\tau(i)-1,2\tau(i)\}$ for each $i$. This defines an injective map of sets
\begin{equation*}
S_2^{\times n}\times S_n\hookrightarrow S_{2n},
\end{equation*}
and $S_2\wr S_n$ carries the unique group structure for which this map is a homomorphism.  The wreath product $S_2\wr S_n$ has a 1-dimensional representation given by letting an element $((\alpha_1\cdc \alpha_n), \tau)$ act by multiplication by $sgn(\tau)$.  We call this representation $L_n$ ($L$ stands for loops).

In what follows, we abuse notation by regarding the sequence of injections
\begin{equation*}
(S_2)^{r-1}\hookrightarrow   S_2\wr S_{r-1} \hookrightarrow S_2\wr S_{r-1} \times S_{n-r+2} \hookrightarrow 
S_{2r-2} \times S_{n-r+2}\hookrightarrow S_{n+r}
\end{equation*}
as a sequence of subgroups. 	We write $Res_H^G$ for the restriction of a representation of a group $G$ to a representation of a subgroup $H$.

\begin{lemma} \label{replemma} Let $2\leq r < n$ be integers.  % The irreducible decomposition of the $(S_2\wr S_{r-1}) \times S_{n-r+2}$ representation  $Res_{(S_2\wr S_{r-1})  \times S_{n-r+2}}^{S_{n+r}}(V_{r,1^n})$ contains:
	\begin{itemize}
		\item  The irreducible decomposition of %the $(S_2\wr S_{r-1}) \times S_{n-r+2}$ representation  
		$Res_{(S_2\wr S_{r-1})  \times S_{n-r+2}}^{S_{n+r}}(V_{r,1^n})$ contains a unique summand of the form $L_{r-1}\boxtimes V_\beta$.  It is of the form $L_{r-1}\boxtimes V_{1^{n-r+2}}$.
		
		\item The irreducible decomposition of $Res_{(S_2\wr S_{q})  \times S_{n-2q+r}}^{S_{n+r}}(V_{r,1^n})$ has no summand of the form $L_{q}\boxtimes - $ for $q\geq r$. 
	\end{itemize} 
\end{lemma}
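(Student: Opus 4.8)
The plan is to realize the hook as an exterior power of the standard representation and then turn the restriction problem into a computation of $(S_2)^q$-invariants. Recall that for $S_{n+r}$ one has $V_{r,1^n}\cong \wedge^n \mathsf{std}$, where $\mathsf{std}=V_{n+r-1,1}$ is the standard representation of dimension $n+r-1$ (since $\wedge^k\mathsf{std}=V_{n+r-k,1^k}$, the choice $k=n$ gives the first part $r$). Write $H_q=(S_2\wr S_q)\times S_m$ with $2q+m=n+r$ for the relevant subgroup, so that $q=r-1,\ m=n-r+2$ in the first bullet and $q\ge r,\ m=n-2q+r$ in the second. The multiplicity of $L_q\boxtimes V_\beta$ in $Res^{S_{n+r}}_{H_q}(V_{r,1^n})$ can be extracted by first passing to $(S_2)^q$-invariants, since $L_q$ is by definition trivial on the base subgroup $(S_2)^q$ and carries the sign character on $S_q$.

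The key input is the restriction of $\mathsf{std}$ to $H_q$. Splitting $\{1,\dots,n+r\}$ into the $q$ pairs and the remaining $m$ points, the permutation module is $\mathbb{Q}^{2q}\oplus \mathbb{Q}^m$. On the first summand I would introduce coordinates $u_i=x_i+y_i$ and $v_i=x_i-y_i$ on the $i$-th pair: the span $U$ of the $u_i$ is the permutation module of $S_q$ with trivial base action, while the span $W$ of the $v_i$ is the sign-twisted permutation module, on which the $i$-th copy of $S_2$ acts by $-1$. Removing the global invariant vector one obtains
\begin{equation*}
Res^{S_{n+r}}_{H_q}(\mathsf{std})\;\cong\; U_0\,\oplus\, W\,\oplus\, B_0\,\oplus\, \mathbf{1},
\end{equation*}
where $U_0\cong \mathsf{std}_q$ as an $S_q$-module, $B_0\cong \mathsf{std}_m$ as an $S_m$-module, and $\mathbf{1}$ is a trivial $H_q$-module. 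The crucial observation is that $(S_2)^q$ acts nontrivially on every basis vector of $\wedge^w W$ for $w\ge 1$, so the $(S_2)^q$-invariant part of $\wedge^n Res^{S_{n+r}}_{H_q}(\mathsf{std})$ is exactly $\wedge^n(U_0\oplus B_0\oplus \mathbf{1})$, with $W$ dropping out entirely.

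On these invariants the residual action factors through $S_q\times S_m$, and I would expand
\begin{equation*}
\wedge^n(\mathsf{std}_q\oplus \mathsf{std}_m\oplus \mathbf{1})=\bigoplus_{a+b+c=n}\wedge^a\mathsf{std}_q\otimes\wedge^b\mathsf{std}_m\otimes\wedge^c\mathbf{1},\qquad c\in\{0,1\}.
\end{equation*}
Since $\wedge^a\mathsf{std}_q=V_{q-a,1^a}$ equals the sign character $L_q|_{S_q}=V_{1^q}$ only for $a=q-1$, the $L_q$-isotypic part forces $a=q-1$, leaving the two candidate $S_m$-constituents $\wedge^{n-q+1}\mathsf{std}_m$ (from $c=0$) and $\wedge^{n-q}\mathsf{std}_m$ (from $c=1$), each with multiplicity one. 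Finally I would invoke $\dim \mathsf{std}_m=m-1$ to decide which survive. In the first bullet $q=r-1$ gives candidate indices $b\in\{m,m-1\}$, so the $c=0$ term $\wedge^{m}\mathsf{std}_m$ vanishes and only $\wedge^{m-1}\mathsf{std}_m=V_{1^{n-r+2}}$ remains, yielding the unique summand $L_{r-1}\boxtimes V_{1^{n-r+2}}$. In the second bullet $q\ge r$ forces $n-q>m-1=\dim\mathsf{std}_m$, so both $\wedge^{n-q}\mathsf{std}_m$ and $\wedge^{n-q+1}\mathsf{std}_m$ vanish and no $L_q\boxtimes -$ summand occurs.

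The main obstacle is the bookkeeping of the base $(S_2)^q$-action in the restriction of $\mathsf{std}$ — in particular isolating the sign-twisted module $W$ and verifying that it contributes nothing to the $L_q$-isotypic part — after which both statements reduce to the elementary vanishing criterion $\wedge^{b}\mathsf{std}_m=0$ for $b>m-1$.
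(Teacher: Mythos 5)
Your proof is correct, and it takes a genuinely different route from the paper's. The paper restricts in two stages: first from $S_{n+r}$ to $S_{2q}\times S_{n+r-2q}$ via the Littlewood--Richardson rule (using that a hook restricts to a sum of summands $V_\alpha\boxtimes V_\beta$ with $\alpha,\beta$ both hooks), and then from $S_{2q}$ to $S_2\wr S_q$, where the multiplicity of $L_q$ in a hook $V_{x,1^y}$ is quoted from Koike--Terada \cite[Proposition 2.3' (iv)]{KT3}: it is $0$ unless $(x,y)=(q+1,q-1)$, in which case it is $1$; the paper also sketches a second, internal argument via the Pieri rule and the gluing maps of Lemma \ref{gamma}. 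You instead realize the hook as $\wedge^n\mathsf{std}$, restrict $\mathsf{std}$ explicitly to $(S_2\wr S_q)\times S_m$ as $U_0\oplus W\oplus B_0\oplus\mathbf{1}$, and use normality of $(S_2)^q$ in this subgroup (together with triviality of $L_q$ on $(S_2)^q$) to pass to $(S_2)^q$-invariants, which kills every positive exterior power of the sign-twisted summand $W$; both bullets then reduce to the elementary vanishing $\wedge^b\mathsf{std}_m=0$ for $b>m-1$. What your approach buys: it is self-contained and elementary, avoiding both the Littlewood--Richardson combinatorics and the external citation, and it makes the multiplicity-one claim transparent, since each candidate triple $(a,b,c)$ occurs exactly once in the exterior-power expansion. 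What the paper's approach buys: the Koike--Terada input computes $L_q$-multiplicities in restrictions of arbitrary irreducibles of $S_{2q}$, not only hooks, and the paper's alternative argument ties the representation theory to the operadic gluing maps used elsewhere in the article, whereas the exterior-power trick is special to hooks (which is all that is needed here). One small caveat: your decomposition of the restricted standard representation presumes $m\geq 1$; in the degenerate case $m=n+r-2q=0$ permitted by the second bullet, the summands $B_0$ and $\mathbf{1}$ are absent, but the conclusion holds a fortiori there, since the $(S_2)^q$-invariants are then just $\wedge^n U_0$, which vanishes because $n>q-1=\dim U_0$.
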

\begin{proof}  
	
Fix $q\geq r-1$.  The number of copies of a summand $V_{\alpha}\boxtimes V_{\beta}$ appearing in 
\begin{equation} \label{res}
Res_{S_{2q}\times S_{n+r-2q}}^{S_{n+r}}(V_{r,1^n})
\end{equation}
is computed via the Littlewood-Richardson rule \cite{FH}.  In this case, because $V_{r,1^n}$ is a hook, each $\alpha$ and $\beta$ appearing with non-zero coefficient must also be hooks.   If $q>r-1$, the number of summands of the from $V_{q+1,1^{q-1}}\boxtimes V_\beta$ appearing in the decomposition of the representation in Equation $\ref{res}$ is zero. % (since $q+1>r$).  
If $q=r-1$, there is a unique summand of the from $V_{q+1,1^{q-1}}\boxtimes V_\beta$ appearing in the decomposition of the representation in Equation $\ref{res}$ is zero.  It is of the form $V_{r,1^{r-2}}\boxtimes V_{1^{n-r+2}}$.

It now remains to analyze the irreducible decomposition of $Res_{S_2\wr S_q}^{S_{2q}}$ of a hook.  The needed calculation, modulo Frobenius reciprocity, is explicitly presented in \cite[Proposition 2.3' (iv)]{KT3} (see also \cite{KT1}) which says that the number of copies of $L_{q}$ appearing in the restriction of a hook $Res_{S_2\wr S_q}^{S_{2q}}(V_{x,1^y})$ is $0$ unless $x=q+1$ and $y=q-1$, in which case it is $1$.  This completes the proof.

While it was convenient that the calculation we needed was available in the literature, there is an argument %\footnote{well this only works with the right parity (q odd?)  That's enough for our purposes, but not ideal.} 
internal to this article which may also be used to prove this.  One first uses the Pieri rule to see that a summand $V_\alpha\boxtimes V_\beta$ appearing in Equation $\ref{res}$ has an $(S_2)^{q}\subset S_2\wr S_q\subset S_{2q}$ invariant subspace if and only if $q=r-1$, $\alpha=(r,2q-r)$ and $\beta=(1^{n+r-2q})$.  Any summand of type $L_q\boxtimes -$ would restrict to an $(S_2)^{q}$ invariant subspace, which establishes the second statement.  On the other hand when $q=r-1$ there is a unique $(S_2)^{r-1}$ invariant subspace in $V_{r,1^{r-2}}$.  Since it is unique, it must contain the image of the gluing operation which grafts $H_0(\Gamma_{0,3})$ onto each input of $H_{r-2}(\Gamma_{1,r-1})\cong V_{1^{r-1}}$, landing in $H_{r-2}(\Gamma_{1,2r-2})\cong V_{r,1^{r-2}}$.  This image is non-zero by Lemma $\ref{gamma}$.  Since we're gluing on to the alternating representation $H_{r-2}(\Gamma_{1,r-1})$, it must be the case that this $(S_2)^{r-1}$ invariant subspace lifts to a $S_2 \wr S_{r-1}$ representation which is alternating with respect to the $S_{r-1}$ factor, i.e.\ to a copy of $L_{r-1}$, which establishes the first statement.  
\end{proof}

\begin{corollary}\label{loopcor}  	
Let $\gamma\in \mathsf{Gr}(g,n)$ with a vertex $v$ of genus $g(v)=1$ and valence $|a^{-1}(v)|=m$.  Consider a homogeneous element 
$$[x] \in \wscmo(\gamma)\tensor_{Aut(\gamma)} \mathfrak{K}^\ast(\gamma)\subset \mathsf{B}(\wscmo)(g,n)$$ whose vertex $v$ carries a label in $\wscmo(v)\cong H_\ast(\Gamma_{1,m})$ of degree $i$.  If $v$ is adjacent to $m-i$ or more loops then $[x]=0$.
%In other words, the maximum number of loops adjacent to a vertex with a degree $i$ label is $m-i-1$. 
\end{corollary}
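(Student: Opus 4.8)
The plan is to produce, from the loops at $v$, a subgroup of $Aut(\gamma)$ whose coinvariants already annihilate $[x]$. Write $\ell \geq m-i$ for the number of loops at $v$. The $2\ell$ flags forming these loops may be flipped within each loop and permuted among the loops, and this produces a subgroup $G\cong S_2\wr S_\ell\leq Aut(\gamma)$ fixing every vertex and every non-loop flag. Because $Aut(\gamma)$ is finite and we work over $\Q$, the $Aut(\gamma)$-coinvariants are a quotient of the $G$-coinvariants, and over $\Q$ coinvariants agree with invariants; so it suffices to prove that $\bigl(\wscmo(\gamma)\tensor\mathfrak{K}^\ast(\gamma)\bigr)^{G}=0$. (If $i$ is odd or $i>m-1$ the label lies in $H_i(\Gamma_{1,m})=0$ by Lemma $\ref{gamma}$ and $[x]=0$ trivially, so we may assume $H_i(\Gamma_{1,m})\cong V_{m-i,1^i}$ is this irreducible hook.)

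Next I would compute the $G$-action on the two tensor factors. The group $G$ acts trivially on each factor $\wscmo(w)$ with $w\neq v$, and acts on $\wscmo(v)\cong H_i(\Gamma_{1,m})\cong V_{m-i,1^i}$ through the inclusions $S_2\wr S_\ell\hookrightarrow S_{2\ell}\hookrightarrow S_m$ of the opening of this section, where each loop-flip is a transposition of the two flags of that loop. On the orientation line $\mathfrak{K}^\ast(\gamma)$ a loop-flip fixes the edge set, hence acts by $+1$, while a transposition of two loops transposes the two corresponding edges, hence acts by $-1$; thus $G$ acts on $\mathfrak{K}^\ast(\gamma)$ precisely through the one-dimensional representation $L_\ell$. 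Since the factors $\wscmo(w)$ with $w\neq v$ are fixed, it therefore suffices to show that $L_\ell$ (which is self-dual) does not occur in $Res^{S_m}_{S_2\wr S_\ell}\bigl(V_{m-i,1^i}\bigr)$, equivalently that no summand of the form $L_\ell\boxtimes-$ occurs in $Res^{S_{m}}_{(S_2\wr S_\ell)\times S_{m-2\ell}}\bigl(V_{m-i,1^i}\bigr)$.

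Finally I would invoke Lemma $\ref{replemma}$ with $r=m-i$ and $q=\ell$: the hypothesis $\ell\geq m-i$ is exactly $q\geq r$, and the second assertion of that lemma says that no summand of the form $L_q\boxtimes-$ appears in this restriction. Hence the $G$-invariants vanish, so $\bigl(\wscmo(\gamma)\tensor\mathfrak{K}^\ast(\gamma)\bigr)^{G}=0$ and therefore $[x]=0$.

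The main obstacle is the bookkeeping of the middle paragraph: pinning down that the Koszul sign twist $\mathfrak{K}^\ast(\gamma)$ restricts along the loop subgroup $G$ to exactly the character $L_\ell$ (and not some other one-dimensional representation), so that the desired vanishing becomes precisely the multiplicity statement proved in Lemma $\ref{replemma}$. A secondary point is that Lemma $\ref{replemma}$ is stated for $2\leq r<n$; one checks these hold in the pertinent range, since $2\ell\leq m$ together with $\ell\geq m-i$ forces $m-i\leq i$, i.e. $r\leq n$. The genuine boundary cases $m-i=1$ or $m=2i$ (all flags of $v$ in loops) may be dispatched directly from the branching of $\bigwedge^{i}(\mathrm{std}_m)\cong V_{m-i,1^i}$ to $S_{2\ell}\times S_{m-2\ell}$: the only hook of $S_{2\ell}$ whose restriction to $S_2\wr S_\ell$ contains $L_\ell$ is $V_{\ell+1,1^{\ell-1}}$, and one verifies from this branching that it fails to appear exactly when $\ell\geq m-i$, giving the same conclusion without appeal to the stable range.
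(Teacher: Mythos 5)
Your proof is correct and takes essentially the same route as the paper's: the paper likewise passes to the subgroup $S_2\wr S_q\leq Aut(\gamma)$ generated by loop flips and loop permutations, identifies the invariants of $H_i(\Gamma_{1,a^{-1}(v)})\tensor\mathfrak{K}^\ast(\gamma)$ with the copies of $L_q$ in $Res^{S_{2q}}_{S_2\wr S_q}(V_{m-i,1^i})$, and concludes that these vanish for $q\geq m-i$. Your care about the boundary cases ($m-i=1$ or $m=2i$, where the stated range $2\leq r<n$ of Lemma \ref{replemma} fails) is warranted and is precisely why the paper invokes \emph{the proof of} Lemma \ref{replemma} --- i.e., the Koike--Terada branching fact for hooks --- rather than its statement.
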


\begin{proof}  
	Let $q$ be the number of loops adjacent to $v$.  Then $Aut(\gamma)$ contains a subgroup isomorphic to $S_2\wr S_q$ generated by transposing the pair of flags in a loop and permuting the set of loops.  This subgroup acts on $H_i(\Gamma_{1,a^{-1}(v)})\tensor \mathfrak{K}^\ast(\gamma)$ and since
	$H_i(\Gamma_{1,a^{-1}(v)})\cong V_{m-i,1^{i}}$,
the invariants of this action correspond to the copies of $L_q$ appearing in $Res_{S_{2}\wr S_q}^{S_{2q}}(V_{m-i,1^i})$. 	From the proof of Lemma $\ref{replemma}$ we see that the number of such copies is $0$ when $q\geq m-i$.  Any such $Aut(\gamma)$-invariant element $[x]$ would require an $S_2\wr S_q$ invariant element of $H_i(\Gamma_{1,a^{-1}(v)})\tensor \mathfrak{K}^\ast(\gamma)$ labeling $v$.  Since there are no such elements when $q\geq m-i$, the $Aut(\gamma)$-coinvariants vanish.
\end{proof}

\begin{definition}  For a non-negative integer $j$ we define $\theta_{2j+1} \in \mathsf{Gr}(2j+1,0)$ as follows. It has two vertices; one of genus 0, call it $v_0$, and one of genus 1, call it $v_1$.  It has $2j+1$ edges, $3$ of which connect the two vertices and the remaining $2j-2$ of which are loops connected to the vertex of genus $1$.  See Figure $\ref{beta}$.  
\end{definition}

Recall that $\mathsf{B}(\wscmo)(g,n)^{r,s}$ denotes the bigraded component of the chain complex $\mathsf{B}(\wscmo)(g,n)$ having $r$ edges and internal degree $s$.
\begin{lemma}\label{1dim}  The subspace
	\begin{equation*}
	(\mathfrak{K}^\ast(\theta_{2j+1})\tensor_{Aut(\theta_{2j+1})} \wscmo(\theta_{2j+1}))^{2j+1,2j} \subset \mathsf{B}(\wscmo)(2j+1,0)^{2j+1,2j}
	\end{equation*}
	is 1-dimensional.
\end{lemma}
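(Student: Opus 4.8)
The plan is to unwind the coinvariant space explicitly and reduce the dimension count to the first bullet of Lemma \ref{replemma}. First I would identify the two vertex labels. The genus-$0$ vertex $v_0$ is trivalent and leg-free, so $\wscmo(v_0)=H_0(\Gamma_{0,3})$ is the $1$-dimensional trivial $S_3$-representation (the commutative product), concentrated in internal degree $0$. The genus-$1$ vertex $v_1$ has valence $m=3+2(2j-2)=4j-1$, and since $v_0$ contributes internal degree $0$, the bigraded constraint $s=2j$ forces the label at $v_1$ to lie in $H_{2j}(\Gamma_{1,4j-1})$, which by Lemma \ref{gamma}(2) is the irreducible $S_{4j-1}$-representation $V_{2j-1,1^{2j}}$. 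Thus the bigraded component in question is canonically $\mathfrak{K}^\ast(\theta_{2j+1})\tensor_{Aut(\theta_{2j+1})}\left(\wscmo(v_0)\tensor V_{2j-1,1^{2j}}\right)$, and I must show its coinvariants are $1$-dimensional.

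Next I would compute the automorphism group and the relevant characters. Since $v_0$ and $v_1$ carry distinct genus labels, every automorphism fixes both vertices, and it can only permute the three edges joining $v_0$ to $v_1$ and permute/flip the $2j-2$ loops at $v_1$; hence $Aut(\theta_{2j+1})\cong S_3\times(S_2\wr S_{2j-2})$. On $\mathfrak{K}^\ast(\theta_{2j+1})$ the $S_3$ acts by $sgn$ (permuting three edges), the loop-permuting $S_{2j-2}$ acts by $sgn$, and the loop-flips act trivially (a flip fixes its edge); comparing with the definition of $L_n$, this character is exactly $V_{1^3}$ on the $S_3$ factor and $L_{2j-2}$ on the wreath factor. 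Meanwhile the action on the vertex labels is the restriction of $V_{2j-1,1^{2j}}$ along the embedding $(S_2\wr S_{2j-2})\times S_3\hookrightarrow S_{4j-1}$, where $S_3$ permutes the three connecting flags and $S_2\wr S_{2j-2}$ the $4j-4$ loop flags, while $S_3$ acts trivially on $\wscmo(v_0)$.

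With these identifications the coinvariants equal the invariants (we work over $\mathbb{Q}$), and since all the relevant $1$-dimensional characters take values $\pm 1$ and are self-dual, the dimension equals the multiplicity of $L_{2j-2}\boxtimes V_{1^3}$ inside $Res^{S_{4j-1}}_{(S_2\wr S_{2j-2})\times S_3} V_{2j-1,1^{2j}}$. I would then apply Lemma \ref{replemma} with $r=2j-1$ and $n=2j$ (so that $n-r+2=3$, $n+r=4j-1$, and $V_{r,1^n}=V_{2j-1,1^{2j}}$): its first bullet asserts that the unique summand of the form $L_{2j-2}\boxtimes V_\beta$ is $L_{2j-2}\boxtimes V_{1^{3}}$, giving multiplicity exactly $1$. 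The hypothesis $2\le r<n$ holds precisely for $j\ge 2$; the remaining case $j=1$ (no loops, $Aut(\theta_3)=S_3$, and $v_1$ labeled by $H_2(\Gamma_{1,3})\cong V_{1^3}$) is checked directly, since $(V_{1^3}\tensor V_{1^3})^{S_3}=\mathbb{Q}$.

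The bulk of the argument is bookkeeping, so the main obstacle is getting the three sign conventions to align: one must verify that loop-flips act trivially on $\mathfrak{K}^\ast$ while acting nontrivially on the vertex label (so that the $\mathfrak{K}^\ast$ character on the wreath factor is exactly $L_{2j-2}$ and not some other character), and that $V_{1^3}=sgn_{S_3}$ matches the $\beta=(1^{n-r+2})$ output of Lemma \ref{replemma}. Once the target character $L_{2j-2}\boxtimes V_{1^3}$ is correctly pinned down, Lemma \ref{replemma} supplies the remaining representation-theoretic content.
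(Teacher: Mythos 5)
Your proof is correct and takes essentially the same route as the paper: identify $Aut(\theta_{2j+1})\cong S_3\times (S_2\wr S_{2j-2})$, recognize $\mathfrak{K}^\ast(\theta_{2j+1})$ as the character $V_{1,1,1}\boxtimes L_{2j-2}$, and reduce the coinvariant count to the first bullet of Lemma \ref{replemma} with $r=2j-1$, $n=2j$. Your explicit treatment of the $j=1$ case is a welcome extra bit of care, since Lemma \ref{replemma} as stated requires $2\leq r$, a hypothesis that fails when $j=1$ and which the paper's own proof silently passes over.
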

\begin{proof}  Observe that $Aut(\theta_{2j+1})\cong S_3 \times (S_2\wr S_{2j-2})$; the $S_3$ permutes the non-loop edges, the factors of $S_2$ transpose the flags in a loop and the $S_{2j-2}$ permutes the loop edges.   As an $Aut(\theta_{2j+1})$-module, the one dimensional vector space $\mathfrak{K}^\ast(\theta_{2j+1})$ has representation type isomorphic to $V_{1,1,1}\boxtimes L_{2j-2}$.    Thus, the $Aut(\theta_{2j+1})$ fixed points of $\mathfrak{K}^\ast(\theta_{2j+1})\tensor \wscmo(\theta_{2j+1})$ are given by the number of copies of $V_{1,1,1}\boxtimes L_{2j-2}$ in the irreducible decomposition of $V_{2j-1,1^{2j}}\cong \wscmo(1,a^{-1}(v_1))$.  Applying Lemma $\ref{replemma}$ with $r= 2j-1$ and $n=2j$, we see there is exactly one such copy. 
\end{proof}

\subsection{Definition of $\beta$.}
We now construct a canonical basis vector spanning $(\mathfrak{K}^\ast(\theta_{2j+1})\tensor_{Aut(\theta_{2j+1})} \wscmo(\theta_{2j+1}))^{2j+1,2j}$.  Define the set $X:=a^{-1}(v_1)$.  This set is partitioned by the edges of $\theta_{2j+1}$ into three blocks of size 1 and $2j-2$ blocks of size 2.  Choose an auxiliary order on the set of edges of $\theta$ such that the three non-loop edges are in the first three positions, and choose an order on the flags within each block.  This fixes a total order on the set $X$ and hence an isomorphism:
\begin{equation}\label{iso}
H_{2j}(\Gamma_{1,4j-1})\cong H_{2j}(\Gamma_{1,X}).  
\end{equation}

Define $x_{2j+1} \in H_{2j}(\Gamma_{1,X})$ to be the composition $$(...(\alpha_{2j+1}\circ_{2j+1} m_2) \circ_{2j} m_2 ) ... ) \circ_{4} m_2 \in H_{2j}(\Gamma_{1,4j-1})$$ composed with this isomorphism. Here $m_2$ is the generator of $H_0(\Gamma_{0,3}) = \mathsf{Com}(2)$, and $x_{2j+1}\neq 0$ by Lemma $\ref{gamma}$.  Observe that permuting the two flags on $m_2$ acts by $+$, where-as permuting blocks of the permutation on $X$ by $\sigma$ is the same as composing with $\sigma \alpha_{2j+1} = sgn(\sigma)\alpha_{2j+1}$ (by equivariance of the operadic compositions).  Therefore the element $x_{2j+1}$ spans the unique copy of $V_{1,1,1}\boxtimes L_{2j-2}$ in $Res^{S_{4j-1}}_{S_3\times (S_2\wr S_{2j-2})} H_{2j}(\Gamma_{1,X})$, where the $S_{4j-1}$ action is inherited from the isomorphism in Equation $\ref{iso}$.  The class $x_{2j+1} \in H_{2j}(\Gamma_{1,X})$ depends on the choice of isomorphism in Equation $\ref{iso}$, but only up to sign.

Define 
\begin{equation}\label{betadef}
\beta_{2j+1}\in(\mathfrak{K}^\ast(\theta_{2j+1})\tensor_{Aut(\theta_{2j+1})} \wscmo(\theta_{2j+1}))^{2j+1,2j} 
\end{equation}
to be the element formed by labeling $v_1$ with $x_{2j+1}$ and $v_0$ by $m_2$ and using the mod 2 edge order induced by the choice above. Observe that $\beta_{2j+1}$ is independent of the choices made.  If we had picked a different edge order the result would differ by two factors of the sign of the corresponding permutation; if we had picked a different loop orientation the result would differ by a transposition of the commutative product.

Here are some features of $\beta_{2j+1}$ and its underlying graph $\theta_{2j+1}$ for particular values of $j$:
\begin{center}
	\begin{tabular}{c|c|c|c|c|c}
		$j$ & total genus & int.\ deg.\  & loops  & valence of $v_1$ & Rep.\ type at $v_1$ \\ \hline
		$1$ & 3 & 2 & 0 & 3 & $V_{1,1,1}$ \\ 
		2 & 5 & 4 & 2  & 7 & $V_{3,1^4}$ \\ 
		3 & 7 & 6 & 4 &  11 & $V_{5,1^6}$ \\ 
%		4 & 9 & 8 & 6  & 15 & $V_{7,1^8}$  \\ 
		$j$ & $2j+1$ & $2j$ & $2j-2$  & $4j-1$ & $V_{2j-1,1^{2j}}$  \\ 
	\end{tabular}
\end{center}
When $j$ is fixed we may abbreviate the notation $\theta:=\theta_{2j+1}; \beta:=\beta_{2j+1}; \omega:=\omega_{2j+1}$ and so on.

%\begin{equation*}
%X \cong \{a_1,a_2,a_3,l^1_1,l^1_2,l^2_1,l^2_2\cdc l^{2j-2}_1,l^{2j-2}_2 \}
%\end{equation*}

	\begin{figure}
	\includegraphics[scale=.4]{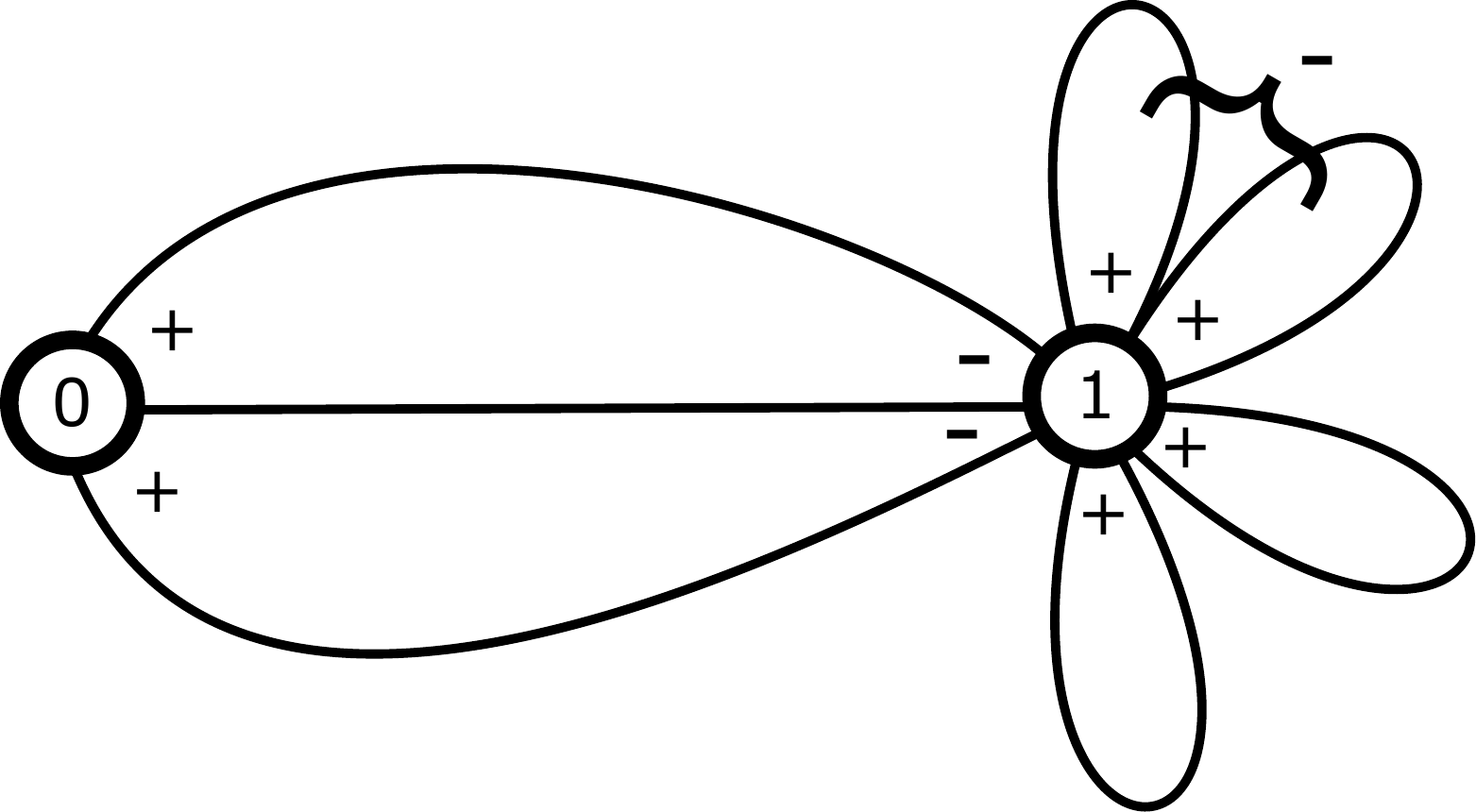} 
	\caption{The case $j=3$.  The graph $\theta_7$ (pictured) underlying the element $\beta_7 \in \mathsf{B}(\wscmo)(7,0)$.  The signs indicate the effect of transposing at the vertices.  The bracket labeled by a minus sign indicates that the block permutation exchanging pairs of flags on two loops gives a minus sign.  %Since the Feynman transform mods out by automorphisms, and since the output of this Feynman transform is $\mathfrak{K}$-twisted, these signs are necessary for this element to be non-trivial.  
		The label of the genus $1$ vertex is (up to scalar multiple) the unique class for which the $Aut(\theta_7)$ acts by such signed multiples.}
	\label{beta}
\end{figure}

\subsection{Analysis of $\partial^{-1}(\beta)$.}

We define a loop in a stable graph $\gamma$ to be simple if the vertex $v$ to which the loop is adjacent satisfies $|a^{-1}(v)|=3$ and $g(v)=0$.
\begin{definition}\label{Bhat}
Define $\widehat{\mathsf{Gr}}(g,n)\subset \mathsf{Gr}(g,n)$ to be the subset of graphs which do not contain simple loops. %$\gamma$ such that:
%\begin{enumerate}
%		\item $\gamma$ has no vertex of type $(1,1)$.
%		\item If $v$ is a vertex of $\gamma$ of type $(0,3)$, then $v$ is not adjacent to a loop in $\gamma$.
%	\end{enumerate}
For a weak modular operad $A$ we then define:
\begin{equation*}
\widehat{\mathsf{B}}(A)(g,n) = \ds\bigoplus_{\gamma \in \widehat{\mathsf{Gr}}(g,n) } A(\gamma)\tensor_{Aut(\gamma)} \mathfrak{K}^\ast(\gamma) \subset \mathsf{B}(A)(g,n).
\end{equation*}	
\end{definition}

\begin{lemma}\label{subcomplex} The submodule $\widehat{\mathsf{B}}(A) \subset \mathsf{B}(A)$ is closed under the co-Feynman transform differential $\partial$.
\end{lemma}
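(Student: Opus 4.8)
The plan is to show that the co-Feynman differential $\partial$ can never create a simple loop in a graph that has none; since $\widehat{\mathsf{B}}(A)$ is spanned by the summands indexed by graphs in $\widehat{\mathsf{Gr}}(g,n)$, this immediately yields $\partial(\widehat{\mathsf{B}}(A))\subseteq \widehat{\mathsf{B}}(A)$. First I would unwind the description of $\partial$ from Diagram~$\ref{coftd}$ and the bigrading~$\ref{bigrading2}$: acting on the summand indexed by a graph $\gamma'$, the differential is the internal term $d_A$ (which leaves the underlying graph of each summand unchanged) together with one term for each nest $N$ on $\gamma'$, which contracts $N$ and relabels the new vertex via $\mu_{\hat N}$, landing in the summand indexed by $\gamma'/N$. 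Since $d_A$ does not alter graphs, it preserves $\widehat{\mathsf{B}}(A)$ trivially, so it suffices to prove that if $\gamma'$ has no simple loop then neither does $\gamma'/N$, for every nest $N$. Note that this is a purely graph-theoretic statement, independent of whether the coefficient $\mu_{\hat N}$ happens to vanish.

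To prove it I would suppose $\gamma'/N$ has a simple loop, that is, a genus $0$, valence $3$ vertex $w$ carrying a loop, and argue by cases on the location of $w$. If $w$ is not the contracted vertex $N$, then $w$, all of its incident flags, edges and loops, and both its genus and its valence are untouched by the contraction; hence the very same simple loop already occurs in $\gamma'$, contrary to hypothesis. The only remaining possibility is $w=N$. Here I would use that the new vertex has genus $\sum_{v\in V(N)} g(v)+\beta_1(N)$, so $g(w)=0$ forces every vertex of $N$ to have genus $0$ and $\beta_1(N)=0$; thus $N$ is a tree. Its valence is the number of external flags, namely $\sum_{v\in V(N)}|a^{-1}(v)| - 2|E(N)|$, and stability gives $|a^{-1}(v)|\geq 3$ at each (genus $0$) vertex, while $|E(N)|=|V(N)|-1$ for a tree. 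Combining these, the valence of $w=N$ is at least $3|V(N)|-2(|V(N)|-1)=|V(N)|+2\geq 4$, since a contracted tree has at least two vertices. This contradicts $w$ having valence $3$.

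The single nontrivial point — the main obstacle — is precisely the case $w=N$: one must rule out that contraction manufactures a brand new simple loop, and it is exactly the stability inequality above, forcing any genus $0$ contracted vertex to have valence at least $4$, that forbids this. Everything else is bookkeeping. I would close by remarking that this is the expected dual of the assertion in the introduction that the summands supported on simple loops form a subcomplex of $\FT(A)$: a simple loop sits at a genus $0$ trivalent vertex, which is minimal, and so can be neither further expanded by the Feynman differential nor, dually, produced by a contraction.
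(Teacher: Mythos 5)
Your proof is correct and takes essentially the same route as the paper's: a new simple loop could only appear at the contracted vertex, and that vertex would then have to be of type $(0,3)$, which is impossible for a nontrivial contraction. The paper settles this last point by simply noting that $\mathsf{Gr}(0,3)$ contains only the corolla, whereas you rederive that fact from the genus formula, stability, and the tree edge count — a harmless expansion of the same idea.
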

\begin{proof}  Given a modular graph $\gamma$ with no simple loops, terms in the weak co-Feynman transform differential are indexed by contractions of subgraphs of $\gamma$.  If such a differential term has a simple loop it must have been created by contracting a subgraph of type $(0,3)$.  However $\mathsf{Gr}(0,3)$ consists only of the $(0,3)$-corolla, so no such contraction is possible.
\end{proof}

We remark that since the weak modular operad $\wscmo$ has internal differential $0$, the summand of $\partial$ which contracts just 1 edge, call it $\partial_1$, is itself a differential (Remark $\ref{d1rmk}$).  %This differential $\partial_1$ depends only on the underlying modular operad structure of Lie graph homology and doesn't see any higher operations.  
In this case Lemma $\ref{subcomplex}$ also shows $(\widehat{\mathsf{B}}(\wscmo)(g,n),\partial_1)$ is a subcomplex of $(\mathsf{B}(\wscmo)(g,n),\partial_1)$. % Observe that $\beta_{2j+1}$ is contained in this subcomplex.

\begin{proposition}\label{propzero}  The composition of $\partial_1$ with projection to the $\theta_{2j+1}$ summand of $\widehat{\mathsf{B}}(\wscmo)(2j+1,0)^{2j+1,2j}$ is zero.  In particular, $\beta_{2j+1}$ does not appear with non-zero coefficient in any $\partial_1$ boundary. 
%	Every $\partial_1$ boundary in $\widehat{\mathsf{B}}(\wscmo)(2j+1,0)$ 	$\beta_{2j+1} \in \widehat{\mathsf{B}}(\wscmo)(2j+1,0) $ is not a $\partial_1$ boundary.
\end{proposition}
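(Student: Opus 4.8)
The plan is to enumerate the graphs that can contribute to the $\theta_{2j+1}$-summand of a $\partial_1$-boundary and to show each contributes nothing. Since $\partial_1$ contracts a single edge and preserves internal degree, any summand hitting the $\theta_{2j+1}$-part of $\mathsf{B}(\wscmo)(2j+1,0)^{2j+1,2j}$ is indexed by a graph $\gamma'$ in bidegree $(2j+2,2j)$ admitting an edge $e$ with $\gamma'/e\cong\theta_{2j+1}$. First I would pin down the shape of $\gamma'$. Using $\beta_1=|E(\gamma')|-|V(\gamma')|+1$ together with total genus $2j+1$ gives $\sum_{v}g(v)=|V(\gamma')|-2$; as $\wscmo$ vanishes in genus $\geq 2$, every vertex has genus $0$ or $1$, so $\gamma'$ has exactly two genus-$0$ vertices and $|V(\gamma')|-2$ genus-$1$ vertices, and the internal degree $2j$ is carried entirely by the genus-$1$ vertices. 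A loop contraction would leave no genus-$1$ vertex, contradicting internal degree $2j>0$, and the trivalent genus-$0$ vertex $v_0$ cannot be expanded for stability reasons. Hence $\gamma'$ has exactly three vertices $v_0,a$ of genus $0$ and $b$ of genus $1$ (carrying a degree-$2j$ label), the contracted edge $e$ joins $a$ and $b$, and $\gamma'$ is an expansion of the genus-$1$ vertex $v_1$ of $\theta_{2j+1}$.

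Next I would parametrize these expansions and prune them with Corollary \ref{loopcor}. Let $p_a,p_b$ count the edges from $v_0$ to $a,b$ (so $p_a+p_b=3$), let $\ell_a,\ell_b$ count the loops at $a,b$, and let $d$ count the edges between $a$ and $b$ other than $e$; then $\ell_a+\ell_b+d=2j-2$ and $|b|=p_b+2\ell_b+d+1$. Applying Corollary \ref{loopcor} to $b$ (valence $|b|$, degree $2j$, with $\ell_b$ adjacent loops), the $Aut(\gamma')$-coinvariant vanishes as soon as $\ell_b\geq|b|-2j$, which simplifies to $p_b\leq\ell_a+1$. Thus only the configurations with $p_b\geq\ell_a+2$ can survive, forcing $\ell_a\leq 1$ and leaving the short list $\ell_a=0,\ p_b\in\{2,3\}$ and $\ell_a=1,\ p_b=3$.

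For each surviving configuration I would compute the $\beta_{2j+1}$-coefficient directly. Since the $\theta_{2j+1}$-summand is one-dimensional by Lemma \ref{1dim}, this coefficient equals the $V_{1,1,1}\boxtimes L_{2j-2}$-isotypic part, under $Aut(\theta_{2j+1})\cong S_3\times(S_2\wr S_{2j-2})$, of the class obtained by grafting the commutative label at $a$ onto the genus-$1$ label at $b$ along $e$ and then passing to $Aut(\theta_{2j+1})$-coinvariants. I would show this part is zero by a representation-theoretic computation in the spirit of Lemmas \ref{1dim} and \ref{H}: the symmetrization built into the coinvariant projection -- the $S_3$ permuting the three $v_0$-edges together with the symmetries of the loops and of the parallel edges between $a$ and $b$ -- forces the alternating sum $\sum_i(i,t+1)(m_t\circ_e\alpha_{2j+1})$ of Lemma \ref{relations} to appear, so the grafted class is annihilated; that the only conceivable target is the $x_{2j+1}$-direction is guaranteed by the same representation theory (Lemma \ref{replemma}) used in Lemma \ref{1dim}.

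The main obstacle is this last step. The loop bound of Corollary \ref{loopcor} does not by itself dispose of the configurations with $p_b\geq\ell_a+2$ -- for instance, when $j=2$ the loop-free graph consisting of a triple edge $v_0$--$b$ and a triple edge $a$--$b$ survives the pruning -- so one must genuinely verify that grafting a commutative product and projecting onto the unique $x_{2j+1}$-direction yields zero. The delicate part is to identify the symmetrization forced by the automorphisms of $\theta_{2j+1}$ (and of $\gamma'$) with the relation of Lemma \ref{relations}, all the while tracking the alternating $\mathfrak{K}^\ast$-signs, so that every surviving configuration is exhibited inside the span of that relation.
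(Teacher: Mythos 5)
Your setup---the reduction to three-vertex graphs $v_0,a,b$ with the contracted edge $e$ joining $a$ to $b$, the parametrization by $(p_a,p_b,\ell_a,\ell_b,d)$, and the pruning via Corollary \ref{loopcor} down to $p_b\geq\ell_a+2$---matches the first half of the paper's proof. But your plan for the surviving configurations has a genuine gap: it cannot work for the configuration $(\ell_a,p_b,d)=(1,3,0)$, in which $a$ is a trivalent genus-$0$ vertex carrying a loop, and $b$ carries $2j-3$ loops, the triple edge to $v_0$, and the edge $e$. No representation-theoretic vanishing is available there. Label $b$ by $\alpha_{2j+1}$ composed with $2j-3$ copies of $m_2$ (this spans the unique $L_{2j-3}\boxtimes V_{1^4}$ summand of $V_{2j-2,1^{2j}}$ furnished by Lemma \ref{replemma}) and label $a$ by $m_2$: every sign from $\mathfrak{K}^\ast(\gamma')$ and $Aut(\gamma')$ matches, so this descends to a nonzero coinvariant class, and contracting $e$ produces precisely $x_{2j+1}$ (up to sign), which is nonzero by iterating Lemma \ref{gamma}(3). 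This graph is excluded for an entirely different reason: it contains a simple loop, so by Definition \ref{Bhat} it is not a summand of $\widehat{\mathsf{B}}(\wscmo)$ at all. That exclusion is the whole point of stating the proposition for $\widehat{\mathsf{B}}$ rather than $\mathsf{B}$; the paper even remarks after the proof that it does \emph{not} assert $\beta_{2j+1}$ is a non-boundary in $\mathsf{B}(\wscmo)$, precisely because $\partial_1^{-1}(\beta_{2j+1})$ is supported on this one simple-loop summand. Since your argument never invokes the no-simple-loop condition, it is in effect attempting to prove a statement that fails in $\mathsf{B}(\wscmo)$, so some step must break---and it breaks exactly at this configuration.

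A secondary issue: for the configurations that genuinely do vanish, the mechanism is not Lemma \ref{relations}. In the paper, $(p_b,d)=(2,0)$ dies by stability (the loop bound forces all loops onto $b$, leaving $a$ with valence $2$); the split-loop cases $(p_b,d)=(2,1)$ and $(3,1)$, and the case of two or more split loops, die because equivariance of $\circ_e$ forces certain transpositions to act trivially (via the commutative label at $a$) or by a sign (via Lemma \ref{replemma} applied at $b$) in a way that is incompatible with the transformation type of $x_{2j+1}$, the only vector the image can meet by Lemma \ref{1dim}. Lemma \ref{relations} enters the paper only later, in Lemma \ref{notzero}, to show a coefficient is \emph{nonzero}---not here, to show one is zero.
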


\begin{proof} As above let $X:=a^{-1}(v_1)$.  The set $X$ is partitioned by the edges of $\theta_{2j+1}$ into 3 blocks of size one and $2j-2$ blocks of size 2.  %Label the elements of $X$ by the symbols:
%\begin{equation*} X \cong \{a,b,c,l^1_1,l^1_2,l^2_1,l^2_2,...\} \end{equation*}
Label the blocks of size $1$ by $a_1,a_2,a_3$ and label the elements of each block of size two by $l^i_1,l^i_2$ label, where $i$ indexes the $2j-2$ loops of $\theta_{2j-2}$.  %This isomorphism gives us a total order on $X$, and in particular fixes an isomorphism $S_3\times (S_2\wr S_{2j-2})\cong Aut(\theta_{2j+1})\subset S_X$ which we leave implicit below.  
As above we write $x_{2j+1}\in H_{2j}(\Gamma_{1,X})$ for a basis vector spanning the unique invariant subspace of $Res^{S_X}_{Aut(\theta_{2j+1})}$ isomorphic to $V_{1,1,1}\boxtimes L_{2j-2}$.  The $S_{\{a_1,a_2,a_3\}}$ action on $x_{2j+1}$ is alternating while each $S_{\{l^i_1,l^i_2\}}$ action on $x_{2j+1}$ is the identity.

Write $\partial_{1}^{\theta}$ for the composition of $\partial_1$ with projection to the $\theta_{2j+1}$ summand of $\widehat{\mathsf{B}}(\wscmo)(2j+1,0)$.  %, so:
%\begin{equation*} \widehat{\mathsf{B}}(\wscmo)(2j+1,0)^{2j+1,2j+1} \stackrel{\partial_{1}^{\theta}}\longrightarrow	(\mathfrak{K}^\ast(\theta_{2j+1})\tensor_{Aut(\theta_{2j+1})} \wscmo(\theta_{2j+1}))^{2j+1,2j}   \end{equation*}
Define $\Theta$ to be the set of graphs $\gamma$ for which the following composite is non-zero:
\begin{equation*}\xymatrix{
(\mathfrak{K}^\ast(\gamma)\tensor_{Aut(\gamma)} \wscmo(\gamma))^{2j+2,2j} \ar@{-->}[dr]  \ar@{^(->}[r] & \widehat{\mathsf{B}}(\wscmo)(2j+1,0)^{2j+2,2j} \ar[d]^{\partial_{1}^{\theta}} \\  & (\mathfrak{K}^\ast(\theta_{2j+1})\tensor_{Aut(\theta_{2j+1})} \wscmo(\theta_{2j+1}))^{2j+1,2j}    }
\end{equation*}
To prove the claim it is sufficient to show that the set $\Theta$ is empty.  By way of contradiction, suppose $\gamma \in \Theta$.  Then $\gamma$ has an edge $e = \{r,s\}$ such that $\gamma/e \cong \theta$.

Note that the vertex of $\gamma/e$ corresponding to $e$ must be sent to the vertex $v_1$ of $\theta_{2j+1}$ since the vertex $v_0$ is of type (0,3) and hence indecomposible.  Note also that the edge $e$ of $\gamma$ can not be a loop for degree reasons -- such a $\gamma$ would have only genus 0 vertices, and so internal degree $0\neq 2j+1$.

Therefore the edge $e$ of $\gamma$ is adjacent to two vertices whose genera add to 1.  Let $Y\cup \{r\}$ be the flags of $\gamma$ adjacent to the genus 1 vertex and $Z\cup \{s\}$ be the flags of $\gamma$ adjacent to the genus 0 vertex.  The isomorphism $\gamma/E \cong \theta$ specifies a partition
	\begin{equation*}
	X =Y\sqcup Z,
	\end{equation*}
	along with a linear map
	\begin{equation*}
	H_{2j}(\Gamma_{1,Y\cup \{r\}})\tensor H_0(\Gamma_{0,Z\cup \{s\}}) \stackrel{\circ_e}\longrightarrow H_{2j}(\Gamma_{1,X}).
	\end{equation*}
	This linear map is $S_{Y}\times S_Z$ equivariant.   In particular, $|X|=|Y|+|Z| = 4j-1$.  We say a loop $l^i$ of $\theta$ is split (by $e$) if both $\{l^i_1,l^i_2\}\cap Y$ and $\{l^i_1,l^i_2\}\cap Z$ are nonempty.  
	
Such a differential term being nonvanishing implies the following:
	\begin{itemize}
		\item $|Y\cup r| \geq 2j+1$ and hence $|Y|\geq 2j$.  Thus $3\leq |Z\cup s| \leq 2j$.  
		\item $|\{a_1,a_2,a_3\}\cap Z| < 2$, since the representation type of $H_0(\Gamma_{0,Z\cup \{s\}})$ is trivial.  So we suppose without loss of generality that $a_2,a_3\in Y$.
		\item At most one loop is split, since otherwise we  would create parallel edges with an alternating action of 
			$\{l_1^i, l_1^h\}$ at one vertex	
		an identity action of
		$\{l_2^i, l_2^h\}$ at the other vertex
		which pass equivariantly to the identity action of both
		$\{l_1^i, l_2^i\}$ and $\{l_1^h, l_2^h\}$
on $X$.%, which is an obvious contradiction.
	\end{itemize}

	Let $\ell$ be the number of loops adjacent to $Y$.  By Corollary $\ref{loopcor}$ we know $\ell \leq |Y|+1 -2j-1=|Y|-2j$, and hence
	\begin{equation*}
	2j \leq |Y|-\ell.
	\end{equation*}  
Consider the possible cases for such a $\gamma \in \Theta$.% supporting a non-zero composition with $\partial_{1}^\theta$.
	
	{\bf Case} $a_1\in Z$ and no loop is split:  Then $|Y|=2\ell+2$ which implies $2j-2 \leq \ell$ and hence  $2j-2=\ell$, since $2j-2$ is the total number of loops.  But then the arity of $Z$ would be 2, contradiction.
	
	{\bf Case} $a_1\in Z$ and one loop is split:  Then $|Y|=2\ell+3$ which implies $2j-3 \leq \ell$ and hence  $2j-3=\ell$, since $2j-2$ is the total number of loops and one was split so can't be adjacent to $Y$. 
	This means that $Y$ carries the maximum number of loops for its given valence, so is alternating off the loops (Lemma $\ref{replemma}$).  Let $l_1,l_2$ be the split loop with $l_1\in Y$ and $l_2\in Z$.  Thus the action of $l_1,a_2$ is alternating on $Y$ and hence on $X$.  But the action of $a_1,a_2$ is alternating whilst $a_1, l_2$ is id on $Z$ and hence $X$, which is a contradiction of the fact that the transpositions $(a_1a_2)$, $(a_1l_2)$ and $(l_1l_2)$ generate the symmetric group of $a_1,a_2,l_1,l_2$.
	
	So we conclude $a_1\in Y$ and proceed to:

	{\bf Case:} Suppose one loop is split.  Then $|Y|= 2\ell+4$ and so $2j-4 \leq \ell < 2j-2$, but only one loop was split so by stability considerations, one must go on $Z$, hence $\ell=2j-4$.  %, which is the maximum number of loops for that arity and hence $Y$ is alternating off the loops.  
	Let $l^i$ be the split loop and $l^h$ be the loop on $Z$.  Equivariance will imply that all permutations of $\{l_1^i,l_2^i,l_1^h,l_2^h\}$ must act by the identity on $X$, (since we can switch $l^i_2,l^h_2$ adjacent to $Z$).  This contradicts the definition of $x_{2j+1}$ which says that $(l^i_1 l^h_1)(l^i_2 l^h_2)$ must act by $-1$.

	We thus conclude no loop is split, hence $|Y|=2\ell+3$ and so $2j-3\leq \ell \leq 2j-2$.
	But if $\ell = 2j-2$, then the vertex adjacent to $Z$ would be unstable.  So the only remaining possibility is that $\ell = 2j-3$, which in turn implies that $Z$ is a vertex of valence 3, genus 0 and adjacent to 1 loop.  But such graphs are excluded from $\widehat{\mathsf{B}}(\wscmo)$ by definition.  We thus conclude $\Theta$ is empty, hence $\beta_{2j+1} \in \widehat{\mathsf{B}}(\wscmo)(2j+1,0)$ is not a boundary. \end{proof}
We remark that the subcomplex $\widehat{\mathsf{B}}(\wscmo)(2j+1,0)\subset \mathsf{B}(\wscmo)(2j+1,0) $ 
does not split, and we do not assert that $\beta_{2j+1}$ is a non-boundary when viewed in $\mathsf{B}(\wscmo)(2j+1,0)$, although the above proof shows that its inverse image is supported on a 1-dimensional subspace of $\mathsf{B}(\wscmo)(2j+1,0)$.

\begin{corollary}\label{deg0}  
	Let $\xi\in\widehat{\mathsf{B}}(\wscmo)(2j+1,0)^{4j+2-s,s}$ be a vector of positive internal degree $s>0$.  Then the projection of $\partial(\xi)$ to the $\theta_{2j+1}$ summand is zero.  %In particular, $\beta_{2j+1}$ does not appear with non-zero coefficient in any $\partial$ boundary of positive internal degree.	
%	$\partial^{-1}(\beta_{2j+1}) \subset \widehat{\mathsf{B}}(\wscmo)^{4j+2} $ is supported on internal degree $0$.
\end{corollary}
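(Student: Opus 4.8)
The plan is to split the projection of $\partial$ to the $\theta_{2j+1}$ summand according to the number of edges contracted and then eliminate each piece by a degree count, reducing everything to Proposition \ref{propzero}. First I would record, using Equation \ref{bigrading2}, that $\partial = \oplus_e \partial_e$ with $\partial_e$ mapping $\mathsf{B}(\wscmo)(2j+1,0)^{r,s}$ into $\mathsf{B}(\wscmo)(2j+1,0)^{r-e,s+e-1}$. The $\theta_{2j+1}$ summand sits in bidegree $(2j+1,2j)$ while $\xi$ sits in bidegree $(4j+2-s,s)$, so matching both the edge count and the internal degree, $4j+2-s-e=2j+1$ and $s+e-1=2j$, each forces $e=2j+1-s$. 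Hence only the single component $\partial_{2j+1-s}$ can possibly contribute to the $\theta_{2j+1}$ summand.

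Next I would dispose of the extreme cases. If $s>2j$ then $2j+1-s\leq 0$, so the only surviving component is $\partial_0$, which is the internal differential $d_{\wscmo}=0$, and there is nothing to prove. If $s=2j$ then $e=1$, and the projection of $\partial_1$ to the $\theta_{2j+1}$ summand vanishes by Proposition \ref{propzero}. This leaves the range $0<s<2j$, where $e=2j+1-s\geq 2$.

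For this range I would invoke the description of $\partial$ in Diagram \ref{coftd}: the map $\overline{\mu}$ applies a single structure operation $\mu_{\hat N}$ at one vertex of the target graph $\theta_{2j+1}$. Thus a summand of $\xi$ indexed by a graph $\gamma$ contributes only if $\gamma$ admits a nest $N$ lying over a single vertex of $\theta_{2j+1}$, with $\gamma/N\cong\theta_{2j+1}$ and $\mu_{\hat N}\neq 0$. Since $N$ has $e\geq 2$ edges, the one-edged operations are unavailable, so the only way to have $\mu_{\hat N}\neq 0$ is for $\hat N$ to be a genus $1$ polygon; by the definition of $\wscmo$ such a polygon has all vertices of genus $0$, and its contraction produces the unique genus $1$ vertex $v_1$ of $\theta_{2j+1}$. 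All vertices of $N$ are therefore of genus $0$, while the one remaining vertex of $\gamma$ maps to the genus $0$ vertex $v_0$ and so is also of genus $0$. Hence every vertex of $\gamma$ has genus $0$ and carries a commutative label of internal degree $0$, forcing $\gamma$ to have internal degree $0$. This contradicts $\gamma$ lying in the support of $\xi$, whose internal degree is $s>0$. Consequently no summand of $\xi$ contributes and the projection vanishes.

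I expect the only genuine difficulty to be the boundary case $s=2j$, which is precisely the content of Proposition \ref{propzero}; the remaining cases are a degree bookkeeping argument resting on the structural fact that the higher operations of $\wscmo$ manufacture positive internal degree solely by contracting genus $0$ polygons, and so cannot be produced from a chain of strictly positive internal degree.
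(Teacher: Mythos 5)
Your proposal is correct and follows essentially the same route as the paper: the bidegree count isolates the component $\partial_{2j+1-s}$, the case $s=2j$ is delegated to Proposition \ref{propzero}, and for $0<s<2j$ the contracted subgraph must be a genus $1$ polygon with all genus $0$ vertices, forcing internal degree $0$ and contradicting $s>0$. The only (harmless) difference is that you treat the trivial range $s>2j$ explicitly, which the paper leaves implicit.
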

\begin{proof}  The Lemma establishes the case $s=2j$. 
	Suppose $0<s<2j$.  Without loss of generality we may assume $\xi$ is a homogeneous element supported on a summand index by a modular graph $\gamma$.   A term in $\partial(\xi)$ is non-zero upon projection to the $\theta_{2j+1}$ summand only if  
it is possible to contract a subgraph $\gamma^\prime$ such that $\gamma/\gamma^\prime \cong \theta$ where $\gamma^\prime$ has $2j+1-s>1$ edges.  As above, such an isomorphism must send the vertex corresponding to $\gamma^\prime$ to $v_1$, so the total genus of $\gamma^\prime$ must be $1$. By definition of $\wscmo$, such an operation is non-trivial only if $\gamma^\prime$ has first betti number $1$.  These two conditions are true simultaneously only if each vertex of $\gamma^\prime$ has genus $0$, which in turn implies that each vertex carries a label in some $H_\ast(\Gamma_{0,m})$, which is concentrated in internal degree $0$.  The only other vertex of $\gamma$ has genus $0$ as well, hence such an element must be supported on internal degree $0$.
\end{proof}

\subsection{Co-operadic non-zero coefficient lemma}

\begin{lemma}\label{notzero}  The differential of the wheel graph 	$\partial(\omega_{2j+1})$ contains $\beta_{2j+1}$ with non-zero coefficient.
\end{lemma}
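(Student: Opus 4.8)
The plan is to extract the $\theta_{2j+1}$-component of $\partial(\omega_{2j+1})$ directly from the defining diagram $\ref{coftd}$ of the co-Feynman differential. Here $\omega_{2j+1}$ is regarded as the wheel with all vertices of genus $0$ and labeled by commutative products, viewed as a vector in $\mathsf{B}(\wscmo)(2j+1,0)$; although $\omega_{2j+1}$ is a cycle in $\mathsf{B}(\mathsf{Com})$ by Lemma $\ref{cycle}$, its $\wscmo$-differential acquires new terms coming from the higher (polygon) operations of Lemma $\ref{H}$, and it is these that we must compute. By diagram $\ref{coftd}$ the desired component is $\overline{\mu}\circ\nu_{\theta_{2j+1}}(\omega_{2j+1})$, and since the invariant part of the $\theta_{2j+1}$-summand is $1$-dimensional and spanned by $\beta_{2j+1}$ (Lemma $\ref{1dim}$), this component is automatically of the form $c\cdot\beta_{2j+1}$; the lemma thus reduces to showing $c\neq 0$.

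First I would pin down the contributing terms. A summand of $\partial$ landing in the two-vertex graph $\theta_{2j+1}$ must contract a single connected nest $N\subset\omega_{2j+1}$ on $2j+1$ vertices to one vertex by a single nonzero operation of $\wscmo$. Since the genus $0$ vertex $v_0$ of $\theta_{2j+1}$ is a $(0,3)$-corolla, and hence indecomposable, the contracted nest must map to the genus $1$ vertex $v_1$; and since the only nonzero operations of $\wscmo$ are single edges and odd polygons, the edges of $N$ must form a $(2j+1)$-cycle. Tracking edges and genera as in Figure $\ref{fig:w5}$, such an $N$ is exactly an \emph{interior polygon}: a Hamiltonian cycle through the central vertex and all but one rim vertex, the excluded rim vertex becoming $v_0$, the $2j-2$ skipped spokes becoming the loops of $\theta_{2j+1}$, and the three remaining edges at the excluded vertex becoming the three edges to $v_0$. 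These cycles are in bijection with the excluded rim vertex, so there are exactly $2j+1$ of them.

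Next I would compute the label produced by a single interior polygon $N$. Contracting $N$ applies the Massey operation $\mu_{\hat N}$ of Lemma $\ref{H}$, which grafts commutative products onto $\alpha_{2j+1}$; this composite is nonzero in $H_{2j}(\Gamma_{1,X})$ by Lemma $\ref{gamma}(3)$, and being of symmetry type $V_{1,1,1}\boxtimes L_{2j-2}$ it projects to a nonzero multiple $c_0\beta_{2j+1}$ of the $1$-dimensional invariant space of Lemma $\ref{1dim}$. To finish, I would sum over the $2j+1$ interior polygons. The rotation $\rho\in Aut(\omega_{2j+1})\cong D_{2j+1}$ acts transitively on them and fixes the chosen representative up to the sign $sgn_{\mathfrak{K}}(\rho)=+1$, since $\rho$ induces a $(2j+1)$-cycle, hence an even permutation, on each of the rim edges and the spokes, and acts trivially on the commutative labels. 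Consequently, after passing to $Aut(\theta_{2j+1})$-coinvariants the $2j+1$ contributions all coincide and reinforce, giving $\overline{\mu}\circ\nu_{\theta_{2j+1}}(\omega_{2j+1})=(2j+1)\,c_0\,\beta_{2j+1}\neq 0$.

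I expect the main obstacle to be the single-polygon step: verifying that the commutatively grafted copy of $\alpha_{2j+1}$ survives passage to $Aut(\theta_{2j+1})$-coinvariants, i.e.\ that $c_0\neq 0$. The operation $\mu_{\hat N}$ presents $\alpha_{2j+1}$ with its extra legs merged at a single (central) slot, whereas $x_{2j+1}$ is built by grafting $m_2$ at $2j-2$ distinct slots; these representatives differ, so mere nonvanishing in $H_{2j}(\Gamma_{1,X})$ does not suffice. What should rescue the argument is the representation theory of Lemmas $\ref{replemma}$ and $\ref{1dim}$ together with the injectivity of operadic gluing (Lemma $\ref{gamma}(3)$): the unique surviving symmetry type is $V_{1,1,1}\boxtimes L_{2j-2}$, which is alternating on the three edges to $v_0$ and trivial under each loop transposition, so that any nonzero $\alpha_{2j+1}$-composite carrying this equivariance necessarily has nonzero image on the $\beta_{2j+1}$-line. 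Establishing this equivariance of the Massey output, and thereby $c_0\neq 0$, is the crux of the computation.
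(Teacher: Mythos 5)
Your setup agrees with the paper's: reduce via Lemma \ref{1dim} and Diagram \ref{coftd} to the $\theta_{2j+1}$-component, identify the contributing nests as interior $(2j+1)$-gons (for $j=1$ you miss that the rim triangle also contracts to $\theta_3$, giving four terms rather than three — the paper treats this case separately — but that is minor), and use automorphisms of $\omega_{2j+1}$ to reduce to a single nest. The genuine gap is the step you yourself flag as the crux: you never prove $c_0\neq 0$, and the route you sketch for it cannot work as stated. By the construction in Lemma \ref{H}, the Massey output of one interior polygon is the single vector $y=m_{2j-1}\circ_e\alpha_{2j+1}\in H_{2j}(\Gamma_{1,X})$, i.e.\ $\alpha_{2j+1}$ with \emph{one} large commutative product grafted at \emph{one} slot (the wheel's central vertex). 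A single vector in the irreducible representation $V_{2j-1,1^{2j}}$ does not ``carry'' a symmetry type; the question is whether its projection onto the one-dimensional isotypic line spanned by $x_{2j+1}$ (after the $\mathfrak{K}^\ast$ twist) is nonzero, and nothing in Lemmas \ref{replemma}, \ref{1dim} or \ref{gamma}(3) forces that. Indeed $y$ visibly fails the equivariance you invoke: transposing the $a_i$ merged into the central $m_{2j-1}$ with an $a_i$ sitting in a direct slot of $\alpha_{2j+1}$, or transposing the two representatives of a loop (one merged into $m_{2j-1}$, the other a direct slot), is not a symmetry of $y$ at all — it produces a genuinely different vector, not $\pm y$.

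This is precisely where the paper does its real work, via a computation your proposal never touches. It first uses the facts that the span $P$ of admissibly $X$-labeled polygons is a single $Aut(\theta_{2j+1})$-orbit (up to sign) and that the contraction is equivariant to reduce to showing $x_{2j+1}$ lies in the image of $\mu_X|_P$. It then factors the contraction through the induced representation (Equation \ref{comp2}), shows by equivariance constraints on coefficients that any preimage of $x_{2j+1}$ must be a linear combination of the two explicit vectors $v,w$ of Equation \ref{wandv}, and uses the relation of Lemma \ref{relations} — never invoked in your proposal — to prove $(2j-2)v-3w\in\ker(\circ_e)$; it follows that $\circ_e(v)\neq 0$ and that $v$, which manifestly lies in the image of $P$, maps to a nonzero multiple of $x_{2j+1}$. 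Some computation of this kind is unavoidable: your heuristic, if valid, would apply equally to configurations where the coinvariant projection genuinely vanishes (the even-polygon contractions shown to be zero in the proof of Lemma \ref{H}, or the many vanishing terms in Proposition \ref{propzero}), so it cannot by itself detect the nonvanishing asserted in this lemma.
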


\begin{proof} By Lemma $\ref{1dim}$, it suffices to show that the projection of $\partial(\omega_{2j+1})$ to the $\theta_{2j+1}$ summand of $\mathsf{B}(\wscmo)(2j+1,0)^{2j+1,2j}$ is non-zero. After Diagram $\ref{coftd}$, it is sufficient to show that composition in the following diagram is not zero: 
\begin{equation}\label{nz1}
\xymatrix{ \omega_{2j+1} \in \mathsf{B}(\wscmo)(2j+1,0)^{4j+2,0} \ar[d]^{\nu_\theta} \ar[r]^{ \ \ \ \ \partial} & \mathsf{B}(\wscmo)(2j+1,0)^{2j+1,2j} \ar@{->>}[d]^{\pi_{\theta}}  \\ 
	\mathsf{B}(\wscmo)(\theta_{2j+1})^{2j+1} \tensor_{Aut(\theta)} \mathfrak{K}^\ast(\theta_{2j+1}) \ar[r]^{\ \ \ \bar{\mu}}	&  \wscmo(\theta_{2j+1})^{2j} \tensor_{Aut(\theta)}\mathfrak{K}^\ast(\theta_{2j+1})
}
\end{equation}	

By definition, $\nu_\theta(\omega_{2j+1})$ is determined by summing over ways to nest the graph $\omega_{2j+1}$ such that collapsing nests gives the graph $\theta_{2j+1}$.  Such a nesting specifies two induced graphs of $\omega_{2j+1}$, $\hat{N}_0$ and $\hat{N}_1$ which collapse to the two vertices $v_0$ and $v_1$ of $\theta_{2j+1}$.  Since the vertex $v_0$ is of type $(0,3)$, the induced stable graph $\hat{N}_0$ must be a corolla and $N_0$ must consist of a lone vertex.    Given such an $N_0$, there is a unique such $N_1$ for which the composite with $\mu_X$ is non-zero. 
% That is because the map $\mu_X$ is given by applying Massey product corresponding to $N_1$:
%\begin{equation*}
%\mathsf{B}(\wscmo)(1,X)^{2j+1}\supset \wscmo(\hat{N}_1)\stackrel{\mu_{N_1}}
%\longrightarrow \wscmo(1,X)
%\end{equation*}
%and then applying $- \tensor_{Aut(\gamma)}\mathfrak{K}^\ast(\gamma)$, and by definition of the weak $\mathfrak{K}$-modular operad structure on $\wscmo$, such a contraction is non-zero only if $N_1$ is a $2j+1$-gon.  
It is given by the unique $2j+1$-gon $N_1$ which is a subgraph of $\omega_{2j+1}$ and which misses a distinguished vertex $N_0$ (right hand side of Figure $\ref{fig:w5}$). 

When $j=1$, a direct computation shows that the four choices for a lone vertex $N_0$ are sent by $\nu_{\theta_3}$ to the same element, hence $\nu_{\theta_3}\neq 0$.  Indeed, with the convention that the edges of the nested triangle appear last in the decomposition, one must apply the permutations $(14)$,$(25)$,$(36)$ and $(14)(25)(36)$ to the conventional edge ordering of $\omega_3$ (Figure $\ref{fig:w5}$) to decompose, and these permutations are all odd.  Then since $\alpha_3=x_3$, $\mu_X$ is simply the Massey product which contracts the triangle, which is not zero.  Whence the case $j=1$.

So we now assume $j>1$. In this case there are $2j+1$ choices for such an $N_0$, corresponding to the outer vertices of $\omega$. 
Therefore, $\nu_{\theta}(\omega)$ is a sum of $2j+1$ terms, corresponding to the choice of an outer vertex and the complimentary $2j+1$ gon.  Since these terms are related by an automorphism of $\omega$, it is enough to show that any one of them is non-zero when composed with $\mu_X$.

So let us fix such an $N_0$ and $N_1$.  The term in the sum $\nu_{\theta}(\omega)$ corresponding to this choice of nesting is given by choosing an isomorphism $\omega/N_1 \cong \theta$, which in turn specifies a labeling of the flags of $\hat{N}_1$ by the set $X:=a^{-1}(v_1)$.  We import the notation $$X=a^{-1}(v_1) = \{a_1,a_2,a_3,\ell^i_1,\ell^i_2 \ | \ 1\leq i\leq 2j-2\}$$ from the proof of Proposition $\ref{propzero}$.   
Since $\omega$ has a unique non-trivalent vertex, so does $\hat{N_1}$. The conditions on this $X$-labeling of the flags of $\hat{N_1}$ coming from the isomorphism $\omega/N_1\cong \theta$ are that the non-trivalent vertex of $\hat{N}_1$ must be adjacent to flags labeled by exactly one of the $a_i$, and one flag from each loop.  The two vertices adjacent to the non trivalent vertex must have the other $a$ labels (Figure $\ref{fig:p}$).  Let $P\subset \mathsf{B}(\wscmo)(1,X)$ be the span of such $X$-labeled, $2j+1$-gons. % They are odd, they have one non-trivalent vertex, they are $X$-labeled subject to the condition that gluing along $\gamma$ gives $\omega$ (as above).
In particular $dim(P)=3!(2j-2)!2^{2j-2}/2$.  We conclude that the bottom row of the diagram is supported on the restriction
\begin{equation*}
\nu_\theta(\omega_{2j+1})_{|_P} \in (\mathsf{Com}(2)\tensor P)\tensor_{Aut(\theta)}\mathfrak{K}^{\ast}(\theta_{2j+1}) \subset \mathsf{B}(\wscmo)(\theta_{2j+1}) \tensor_{Aut(\theta)} \mathfrak{K}^\ast(\theta_{2j+1})
\end{equation*}
where $\mathsf{Com}(2)$ labels $v_0$ and $P$ labels $v_1$.  

The bottom row in Diagram $\ref{nz1}$ is given by contracting the $X$-labeled graph at the vertex $v_1$ of $\theta_{2j+1}$, via the associated Massey product.  It thus remains to show that contraction of such $X$-labeled polygons
\begin{equation*}
(\mathsf{Com}(2)\tensor P)\tensor\mathfrak{K}^\ast(\theta_{2j+1}) \stackrel{\mu_X}\to \wscmo(\theta_{2j+1})^{2j}\tensor \mathfrak{K}^\ast(\theta_{2j+1})
\end{equation*}
is non-zero upon passage to $Aut(\theta)$-coinvariants.  Since this map is $Aut(\theta)$ invariant, it is enough to know that the class $x_{2j+1}\in \wscmo(1,X)$ is in the image of the contraction when restricted to  $P\subset \mathsf{B}(\wscmo)(1,X) \to \wscmo(1,X)$.  The remainder of the proof is dedicated to this calculation.

%The map $\mu_X$ in the bottom row of Diagram $\ref{nz1}$ is induced by contracting the $X$-labeled graph labeling $v_1$.    $P\subset \mathsf{B}(\wscmo)(1,X) \to \wscmo(1,X)$

%Since the source and target of this map are both 1-dimensional, this will be true if and only if the contraction map $P\subset \mathsf{B}(\wscmo)(1,X) \to \wscmo(1,X)$
%surjects onto the span of $x_{2j+1} \in H_{2j}(\Gamma_{1,X})$.  % The passage to coinvariants will still be surjective.

\begin{figure}
	\centering
	\includegraphics[scale=0.45]{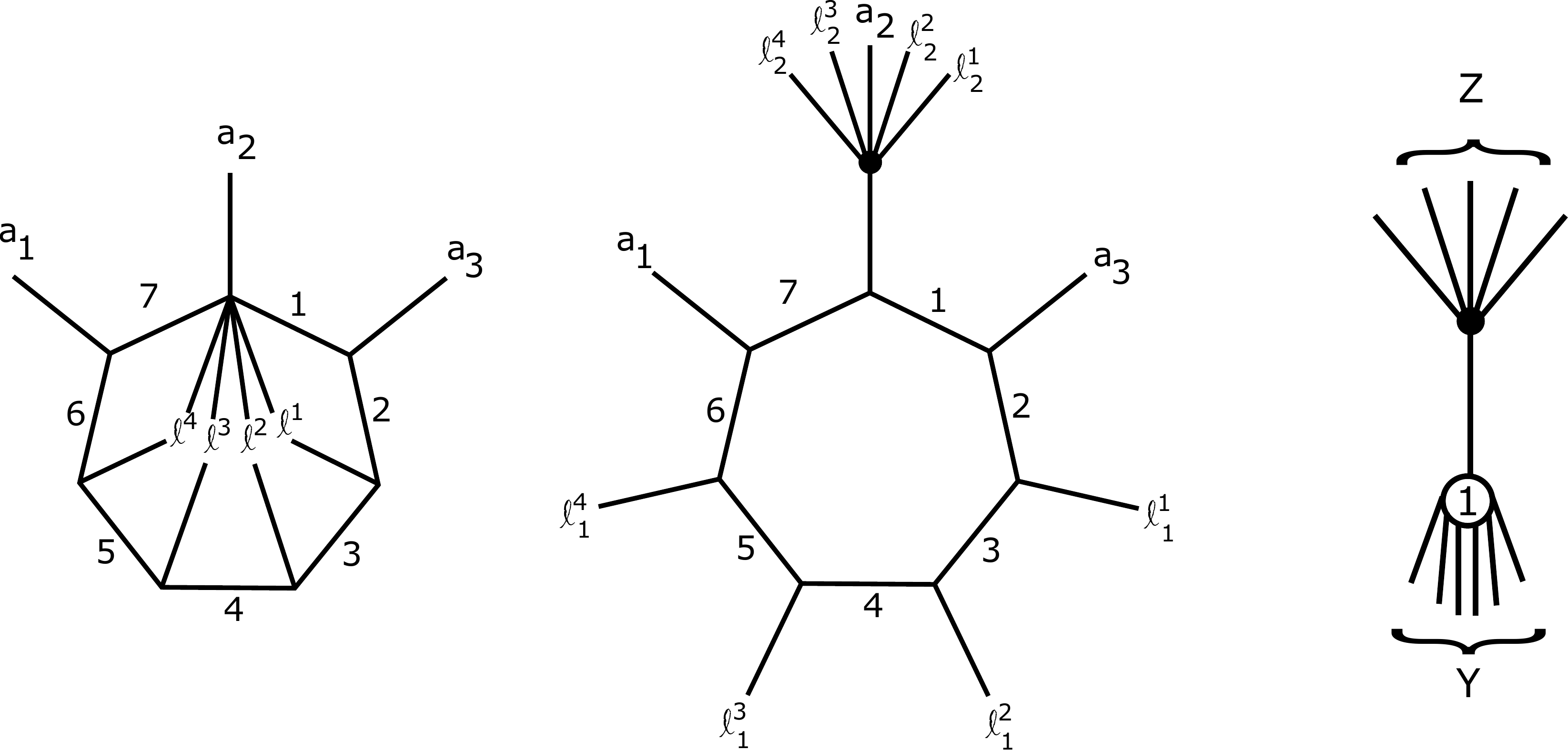}
	\caption{At left, the vector $\rho$ in the case $j=3$ is a graph with $11$ legs whose underlying leg free graph is a $7$-gon.  The image of $\rho$ under the heptagonal Massey product is determined by first contracting the heptagon in the center to the VCD class $\alpha_7\in H_{6}(\Gamma_{1,Y})$ and then contracting the lone edge in the graph on the right. }
	\label{fig:p}
\end{figure}

Recall that the class $x_{2j+1}$ is defined by choosing a total order on the set $X$, which fixes an isomorphism $H_{2j}(\Gamma_{1,4j-1})\cong H_{2j}(\Gamma_{1,X})$ (see Equation $\ref{iso}$).  By abuse of notation we write $x_{2j+1}\in H_{2j}(\Gamma_{1,4j-1})$ for the image of $x_{2j+1}$ under this isomorphism.  Note that since $x_{2j+1}$ depends on this isomorphism only up to sign, the span is independent of choice.  Thus, using the bijection $X\cong \{1\cdc 4j-1\}$,
 we may view $P$ as carrying numerical labels, and it suffices to show the contraction
\begin{equation*}
P \subset \mathsf{B}(\wscmo)(1,4j-1) \to H_{2j}(\Gamma_{1,4j-1})
\end{equation*}
surjects onto the span of $x_{2j+1}$. To be pedantic, the bijection $X\cong \{1\cdc 4j-1\}$ sends $a_i \mapsto i$ for $1\leq i \leq 3$ and $\ell_r^i\mapsto 2(i+1)+(r-1)$.  As such we call a pair of numbers $2(i+1), 2(i+1)+1$ with $1\leq i \leq 2j-2$, a loop, called loop $i$ or the $i^{th}$ loop.  We say a representative of a loop is one of its entries.

%As above, $P$ be the space of odd $2j+1$-gons having $4j-1$ legs which glue along $\theta$ to produce $\omega$.  Each such $2j+1$-gon has a distinguished vertex adjacent to $2j-1$ legs, labeled with a representative of each loop and one of $\{1,2,3\}$.  The two adjacent vertices are trivalent and adjacent to legs labeled with the remaining elements of $\{1,2,3\}$.  The remaining vertices are $2j-2$ vertices are also trivalent and adjacent to legs labeled by the remaining loop representatives.

The contraction map $P\to H_{2j}(\Gamma_{1,2j-1})$ is given by the Massey product which contracts each such $2j+1$-gon.  The image of this contraction is determined by expanding the distinguished (non-trivalent) vertex and its legs to a new edge, contracting the $2j+1$-gon, and then contracting the expanded edge (Figure $\ref{fig:p}$).  In other words, this map factors as
\begin{equation}\label{comp2}
P \to Ind^{S_{4j-1}}_{S_{2j-1}\times S_{2j}}(H_{0}(\Gamma_{0,2j})\tensor H_{2j}(\Gamma_{1,2j+1})) \stackrel{\circ_e}\longrightarrow H_{2j}(\Gamma_{1,4j-1})
\end{equation}
Notice the map $\circ_e$ is surjective, since it is non-zero and the target is irreducible.

We now analyze $\circ_e^{-1}(x_{2j+1})$. Recall $x_{2j+1}$ spans a representation of $Aut(\theta)$ for which $S_{\{1,2,3\}}$ acts by the alternating representation, transpositions of representatives of a loop acts by the identity and where each $(45)(2i,2i+1)$ with $1<i-1\leq 2j-2$ acts by $-1$.

The space $Ind^{S_{4j-1}}_{S_{2j-1}\times S_{2j}}(H_0(\Gamma_{0,2j})\tensor H_{2j}(\Gamma_{1,{2j+1}}))$ has a basis given by $  i_1\wedge i_2 \wedge \dots \wedge i_{2j}$ with
where $1\leq i_1< i_{2}<... <i_{2j}\leq 4j-1$.  The $S_{4j-1}$ action is by permutation, with permutation of wedge products acting by the sign of the permutation.  Write a preimage of $x_{2j+1}$ in this basis:
	\begin{equation}\label{pullback}
\ds\sum_{1 \leq i_1<...<i_{2j}\leq 4j-1} c(i_1, i_2 \cdc i_{2j}) i_1\wedge i_2 \wedge \dots \wedge i_{2j} \in \circ_e^{-1}(x).
	\end{equation}
Since $S_{\{1,2,3\}}$ acts by the sign representation on $x_{2j+1}$, and $\circ_e$ is equivariant, $c(i_1\cdc i_{2j})=0$ unless $\{ i_1\cdc i_{2j}\} \cap \{1,2,3\} \geq 2$. % (else could permute the missing pair and get $c=-c$).  
Likewise, $c(i_1\cdc i_{2j})=0$ if it lists both representatives of any loop since transposition of loop representatives acts by the identity.

To convey additional conditions that the coefficients in Equation $\ref{pullback}$ must satisfy we fix some new notation.  For a subset $I\subset \{1\cdc 2j-2\}$ we define $\op{S}_I$ to be the set of lists of representatives of those loops not appearing in $I$, such that no list has two representatives of the same loop.  In particular, each list has $2j-2-|I|$ entries and there are $2^{(2j-2-|I|)}$ such lists.  
Let $S_I$ be the formal sum of wedge products appearing in $\op{S}_I$.  For brevity we write $\op{S}:=\op{S}_\emptyset$, $S:=S_\emptyset$, $\op{S}_i:=\op{S}_{\{i\}}$ and $S_i:=S_{\{i\}}$. We will denote lists of loop representatives by $\vec{s}\in \op{S}_I$.  By abuse of notation we also write $\vec{s}$ for the associated wedge product.  We write $\vec{s}^\perp$ for the list of complimentary representatives. 
  For example if $j=2$ then $\op{S} = \{(4,6),(5,6),(4,7),(5,7)\}$, $\op{S}_2 = \{(4),(5)\}$, $S = 4\wedge 6 + 5\wedge 6 + 4\wedge 7 + 5\wedge7$ and $S_2 = 4\wedge 5$.  If $\vec{s}=(4,6)$ then $\vec{s}^\perp = (5,7)$, and by abuse of notation me may also write $\vec{s}=4\wedge 6$.

With this notation we describe conditions on the coefficients in Equation $\ref{pullback}$.  First, %if $i_1,i_2,i_3=1,2,3$ then:
			\begin{equation*}
c(1,2,3, \vec{a}) = (-1)^{i-1}c(1,2,3, \vec{b}) \text{ if } \vec{a}\in \op{S}_1 \text{ and } \vec{b} \in \op{S}_i.
	\end{equation*}	
This condition is forced by the equivariance of $\circ_e$. Transposing representatives of a loop acts by the identity, so the coefficient is independent of choice of list in $\op{S}_i$ (resp.\ $\op{S}_1$).  On the other hand, a list in $\op{S}_1$ is missing one loop, namely $\{4,5\}$, and the permutation $(45)(2(i+1),2(i+1)+1)$ acts by $-1$ for any $i>1$. 
 The result may be compared with a list in $\op{S}_i$ by applying an $i-1$ cycle, acting by $(-1)^i$, hence the claim.

Second, consider coefficients whose index has exactly two of 1,2,3.  The condition that both representatives of a loop can't appear in a wedge, means that wedges must have one representative from each loop.  This gives conditions:
	\begin{equation*}
	c(1,2,\vec{a}) = -  c(1,3,\vec{a}) =  c(2,3,\vec{a})  \text{ \ and \ } c(1,2,\vec{a}) = c(1,2,\vec{b}) \text{ for each } \vec{a},\vec{b} \in \op{S}.
	\end{equation*}

 Define $w,v \in Ind^{S_{4j-1}}_{S_{2j-1}\times S_{2j}}(H_0(\Gamma_{0,2j})\tensor H_{2j}(\Gamma_{1,{2j+1}}))$ by	
	\begin{equation}\label{wandv}
	w = \sum_{1\leq i \leq 2j-2} (-1)^{i} 1\wedge 2 \wedge 3 \wedge S_{i} \text{ \ \ \ and \ \ \ } 	v =  1\wedge 2 \wedge S - 1\wedge 3 \wedge S + 2\wedge 3 \wedge S.
	\end{equation}
	The above conditions on coefficients show that a vector in
	$\circ_e^{-1}(x_{2j+1})$ must be a linear combination of $w$ and $v$. 
To find this linear combination, we invoke Lemma $\ref{relations}$ which, in the present notation states 
	\begin{equation}\label{relations2}
	(id+(1,2j)+(2,2j)+\dots+(2j-1,2j)) 2j\wedge 2j+1 \wedge \dots \wedge 4j-1
	\end{equation}
	is in the kernel of $\circ_e$.  Consequently any permutation of this relation is in $ ker(\circ_e)$ as well. % Informally this relation says that a wedge product is the negative of the sum of ways to swap in the elements not appearing in the wedge.  
	Using this relation, we now calculate the linear combination of $v$ and $w$ which is in $ ker(\circ_e)$.   Writing $\sim$ for the induced equivalence relation, the calculation follows from the following two claims:

{\bf Claim 1:}  $
1\wedge 2 \wedge S \sim -1\wedge 3 \wedge S \sim 2\wedge 3 \wedge S \sim  -1\wedge 2 \wedge 3 \wedge S_{1} $

{\bf Proof:} Let's prove that $2\wedge 3 \wedge S \sim  -1\wedge 2 \wedge 3 \wedge S_{1} $, with the other two following similarly. $2\wedge 3 \wedge S $ is a sum of $(2j-2)^2$ wedge products of length $2j$.  Each of the terms in the sum have $4$ or $5$ appearing once in the third position, and $1$ does not appear at all. Hence terms appearing in this sum can be paired to write:
$
 2\wedge 3\wedge S = \sum_{\vec{b}\in S_1} 2\wedge 3 \wedge (4\oplus 5)\wedge \vec{b}
$.  
To each such $\vec{b}$ we apply a permutation of the relation in Equation $\ref{relations2}$ to $2\wedge 3 \wedge 1 \wedge \vec{b}$ to find:
\begin{equation*}
2\wedge 3 \wedge 4 \oplus 5 \wedge \vec{b} =
((14)\oplus (15)) 2\wedge 3 \wedge 1 \wedge \vec{b} \sim -2\wedge 3 \wedge 1 \wedge \vec{b} - 2\wedge 3 \wedge (\oplus_{b\in \vec{b}^\perp} b) \wedge \vec{b}
\end{equation*}
The sum over all $\vec{b}$ of $- 2\wedge 3 \wedge (\sum_{b\in \vec{b}^\perp} b) \wedge \vec{b}$ vanishes since each term has two representatives of one loop, and hence pairs with the term in which the representatives appear in the transposed order.  Combining the previous two equations with this observation proves the claim.

{\bf Claim 2:} $(-1)^{i-1}1\wedge 2 \wedge 3 \wedge S_i \sim 1\wedge 2 \wedge 3 \wedge S_1$.

{\bf Proof:}  The claim is vacuous for $i=1$, so fix $1 < i \leq 2j-2$.  Then each term in $1\wedge 2 \wedge 3 \wedge S_i$ has 4 or 5 appearing in it. Apply the relations to each term with a 4 appearing:
%\begin{equation*} 1\wedge 2 \wedge 3 \wedge 4 \wedge \vec{s}\sim - \sum_{a\in \vec{s}}(4,a) (1\wedge 2 \wedge 3 \wedge a \wedge \vec{s}) \end{equation*}
\begin{equation*}
1\wedge 2 \wedge 3 \wedge 4 \wedge \vec{b}\sim - 1\wedge 2 \wedge 3 \wedge (5\oplus 2(i+1) \oplus (2(i+1)+1) \oplus (\oplus_{b\in \vec{b}^\perp} b)) \wedge \vec{b}
\end{equation*}
for each $\vec{b}\in \op{S}_{\{1,i\}}$. The terms replacing $4$ with $5$ cancel with the terms in which 5 originally appeared.  The terms where both representatives of a loop appear cancel in pairs.  The remaining terms replace 4 with a representative of loop $i$.  To compare these terms with $1\wedge 2\wedge 3 \wedge S_1$, it suffices to permute the order of the representatives of the loops into numerical order.  This is done via a cycle of length $i-1$, 
so produces a factor of $(-1)^{i-2}$.  Combining this with the factor of $-1$ already appearing yields the result.

These two claims together imply $(2j-2)v-3w \in ker(\circ_e)$.  From this calculation we conclude that $\circ_e(v) \neq 0$, since if it were zero, it would imply $\circ_e^{-1}(x_{2j+1})=0$, but iteration of Lemma $\ref{gamma}$ shows this not to be the case.

Finally, it remains to observe that the vector $v$ is in the image of $P$ in Equation $\ref{comp2}$.  Indeed the map $P\to Ind^{S_{4j-1}}_{S_{2j-1}\times S_{2j}}(H_0(\Gamma_{0,2j})\tensor H_{2j}(\Gamma_{1,2j+1})$ sends each $X$-labeled $2j+1$-gon to a wedge containing two of $\{1,2,3\}$ and exactly one representative of each loop.  The vector $v$ is defined as a linear combination of such wedge products, so it is in the image.  In particular, we have shown the contraction map $P \to \wscmo(1,X)$ surjects onto the span of $x_{2j+1}$, from which the statement follows. 	\end{proof}

\subsection{Proof of Main Results}\label{ld}
To conclude, we observe how our main results stated in the introduction follow by linear dualizing the results of this section.
 
{\bf Proof of Theorem $\ref{itsaboundary2}$.}  
As above $\widehat{\mathsf{B}}$ denotes the subcomplex of the co-Feynman transform with no simple loops.  Dualizing the inclusion $\widehat{\mathsf{B}}(\wscmo)\hookrightarrow \mathsf{B}(\wscmo)$, we have a projection $\FT(\wscmo)\twoheadrightarrow \bar{\FT}(\wscmo)$ which quotients by simple loops.  Since $\beta_{2j+1}$ spans the $\theta_{2j+1}$ summand of $\mathsf{B}(\wscmo)(2j+1,0)^{2j+1,2j}$, we may define its characteristic functional $\eta_{2j+1} := \beta_{2j+1}^\ast \in \FT(\wscmo)(2j+1,0)$, extending by $0$ off this summand.  We write $\bar{\eta}_{2j+1} \in \bar{\FT}(\wscmo)(2j+1,0)$ for its image under projection.    Corollary $\ref{deg0}$ shows that no vector in $\partial(\text{ker}( \widehat{\mathsf{B}}(\wscmo) \twoheadrightarrow  \widehat{\mathsf{B}}(\mathsf{Com})))$ has a non-zero coefficient of $\beta_{2j+1}$.  Therefore $d(\bar{\eta}_{2j+1})(\text{ker}( \widehat{\mathsf{B}}(\wscmo) \twoheadrightarrow  \widehat{\mathsf{B}}(\mathsf{Com})))=0$, from which we conclude $d(\bar{\eta}_{2j+1}) \in \bar{\FT}(\mathsf{Com})$.  Finally, projecting $d(\bar{\eta}_{2j+1})$ along $\bar{\FT}(\mathsf{Com})(2j+1,0) \twoheadrightarrow \Sigma^{-4j-2}\mathsf{GC}_2^{2j+1}$, we find 
$d(\bar{\eta}_{2j+1})(\omega_{2j+1}) \neq 0$ from Lemma $\ref{notzero}$.

{\bf Proof of Corollary $\ref{dualcor}$.}  The wheel graph $\omega_{2j+1} \in \Sigma^{4j+2}\mathsf{GC}_2^\ast$ is a cycle, it remains to see that it can't be a boundary.  Suppose that it were a boundary, so that $d_{\mathsf{GC^\ast_2}}(\xi)=\omega_{2j+1}$.  The vector $\xi$ has a canonical pre-image $\xi\in\hat{\mathsf{B}}(\wscmo)$ for which $\partial(\xi) =\partial_1(\xi)+\partial_{>1}(\xi)= \omega_{2j+1}+$ terms of higher internal degree. Since $\partial^2(\xi)=0$, it must be the case that the non-zero scalar multiple of $\beta_{2j+1}$ appearing in $\partial(\omega_{2j+1})$ (after Lemma $\ref{notzero}$) is canceled out by another term in $\partial(\xi)$.  However Corollary $\ref{deg0}$ shows that this can't happen.  We conclude $\omega_{2j+1}$ is not a boundary.

%\bibliography{modularbib}
%\bibliographystyle{alpha}

\end{document}